\newtheorem{definition}{Definition}[section]
\newtheorem{theorem}[definition]{Theorem}
\newtheorem{rem}[definition]{Remark}
\newtheorem{lemma}[definition]{Lemma}
\newtheorem{prop}[definition]{Proposition}
\DeclareMathOperator{\Div}{Div}
\definecolor{blu}{rgb}{0.1,0.0,0.9}
\definecolor{bordeaux}{rgb}{0.75,0.0,0.0}
\newcommand\blu{\color{black}}
\newcommand\R{\mathbb{R}}
\newcommand\N{\mathbb{N}}
\newcommand{\MM}{\mathcal{M}}
\newcommand{\PP}{\mathcal{P}}
\newcommand{\WW}{\mathcal{W}}
\newcommand{\supp}{\text{supp}}
\newcommand{\sgn}{\text{sgn}}
\newcommand{\Lip}{\text{Lip}}
\newcommand\eps{\epsilon}
\newcommand\weakstar{\overset{\ast}{\rightharpoonup}}
\title{A Nonlocal Aw-Rascle-Zhang system with linear pressure term}
\author{Debora Amadori}
\address{Dipartimento di Ingegneria e Scienze dell'Informazione e Matematica (DISIM), University of L'Aquila, L'Aquila, Italy} 
\email{debora.amadori@univaq.it}
\author{Felisia Angela Chiarello}
\address{Dipartimento di Ingegneria e Scienze dell'Informazione e Matematica (DISIM), University of L'Aquila, L'Aquila, Italy} 
\email{felisiaangela.chiarello@univaq.it}
\author{Gianmarco Cipollone}
\address{Dipartimento di Ingegneria e Scienze dell'Informazione e Matematica (DISIM), University of L'Aquila, L'Aquila, Italy} 
\email{gianmarco.cipollone@graduate.univaq.it {\rm (corresponding author)}}
\begin{document}
\begin{abstract}
In this paper, we study a nonlocal extension of the Aw-Rascle-Zhang traffic model, where the pressure-like term 
is modeled as a convolution between vehicle density and a kernel function. This formulation captures nonlocal driver
interactions and aligns structurally with the Euler-alignment system studied in \cite{Leslie-Tan}. 
Using a sticky particle approximation, we construct entropy weak solutions to the equation for the cumulative density and prove convergence of approximate solutions
to weak solutions of the nonlocal system. The analysis includes well-posedness, stability estimates, and an entropic selection principle.
\end{abstract}

\subjclass{Primary: 35D30, 35L65, 35Q92; Secondary: 76N10, 35B30.} 

\keywords{Nonlocal Aw-Rascle-Zhang system, sticky particle dynamics, weak solutions}

\maketitle


\section{Introduction}
\setcounter{equation}{0}
Traffic flow models have been studied since many decades, following the growing scientific interest raised by the applications.
Within the class of macroscopic models, the classical LWR model \cite{LighthillWhitham, Richards} is based on a closure relation 
that expresses the velocity as a function of the density and has been widely used for applications in engineering because of its simplicity. 
However, it shows some limitations, as for instance it is not suitable to predict some real-life complex traffic phenomena such as phantom jams. 

Second order models have been introduced to overcome the limitations for first order models. One of the well-known second order traffic models 
is the so-called Aw-Rascle-Zhang model (ARZ), introduced in \cite{AR00,Zhang2002}, that reads as
\begin{equation}\label{eq:ARZ-local}
   \begin{cases}
        \partial_t\rho+\partial_x(\rho u)=0,\\
        \partial_t (u+ p(\rho)) +u\partial_x(u+p(\rho))=0\,.
    \end{cases}
\end{equation}
Here $\rho(x,t)\ge 0$ represents the macroscopic density of vehicles, i.e. the number of vehicles per unit length, 
$u(x,t)$ the mean velocity and $p(\rho)$ is a smooth increasing pressure-like function 
representing the attitude of drivers to adapt to the traffic conditions.

In this paper, motivated by some recent results about nonlocal traffic models, see for example\cite{ChiarelloGoatin, 2018Gottlich,
chiarello2020micro}, we introduce a nonlocal version of \eqref{eq:ARZ-local}  where the pressure term is replaced by a convolution
term between the density $\rho$ and a kernel function $\omega$. In particular, we focus our analysis on the case $p(\rho)=\rho$.
The system reads as  
\begin{equation}\label{ARZ first}
    \begin{cases}
        \partial_t\rho+\partial_x(\rho u)=0,\\
        \partial_t u+u\partial_xu=-(\partial_t+u\partial_x)(\omega\ast\rho)\,.
    \end{cases}
\end{equation}
The second equation describes the acceleration of drivers along their motion, and implies that the quantity 
\begin{equation}\label{def:psi}
 \psi:=u+\omega\ast\rho
\end{equation}
satisfies the equation 
\begin{equation}\label{eq:cons-of-psi}
    \partial_t\psi+u\partial_x\psi=0\,.
\end{equation}
Multiplying the first equation of the system by $\psi$ and the second one by $\rho$, the system can be written in conservation form as follows:
\begin{equation}\label{ARZ_conservative}
    \begin{cases}
        \partial_t\rho+\partial_x(\rho u)=0,\\
        \partial_t(\rho\psi)+\partial_x(\rho u\psi)=0.
    \end{cases}
\end{equation}
Throughout the paper we will assume that the initial total mass equals to 1 and then, by conservation of total mass, $\int_\R\rho(x,t)\,dx=1$ for all $t>0$. 
The convolution kernel $\omega$ in \eqref{ARZ first} is assumed to satisfy
\begin{equation*}
    \textbf{(H)} \qquad \omega:\R\to[0,+\infty)\,,\quad \omega \in W^{1,1}(\R)\cap W^{1,\infty}(\R)\,,\qquad \int_\R\omega\,dx=1\,. 
\end{equation*}
Moreover, we will denote by $\phi$ the derivative of $\omega$: 
\begin{equation}\label{phi-def}
    \phi:=\omega' \in L^1(\R)\cap L^\infty(\R)\,.
\end{equation}
It is interesting to compare \eqref{ARZ first} with the Euler-alignment system, 
coming from the theory of collective behavior, see \cite{Leslie-Tan},
\begin{equation}\label{eq:Euler_alignment}
\begin{cases}
    \partial_t \rho +\partial_x (\rho u)=0,\\
    \partial_t(\rho u)+ \partial_x (\rho u^2)=\rho (\phi \ast(\rho u))-\rho u (\phi \ast \rho)
\end{cases}
\end{equation}
where $\rho$ and $u$ represent the density and velocity, respectively, and the function $\phi$ is called 
\textit{communication protocol} governing the strength of the interactions between the agents. 

Indeed, we reformulate the second equation of \eqref{ARZ first} using the evaluations of the partial derivatives 
of the convolution products, i.e. 
\begin{align*}
    \partial_t(\omega\ast\rho)(x,t)
    &=\int_{-\infty}^\infty\partial_t\rho(y,t)\omega(x-y)\,dy=-\int_{-\infty}^\infty \partial_x(\rho u)(y,t)\omega(x-y)\,dy \\
    &
    =-\int_{-\infty}^\infty (\rho u)(y,t) \phi
    (x-y)\,dy =- (\phi \ast (\rho u)) (x,t),
\end{align*}
and 
\begin{align*}
   u \partial_x(\omega\ast\rho)(x,t)&= u(x,t)\int_{-\infty}^\infty \omega(x-y)\rho_y(y,t)\,dy \\
   &=u(x)\omega(x-y)\rho(y,t)|_{-\infty}^\infty+u(x,t)\int_{-\infty}^\infty\phi(x-y)\rho(y,t)\,dy \\ 
   &=u(\phi\ast\rho)(x,t).
\end{align*}
Now, the second equation of \eqref{ARZ first} reads
\begin{equation}\label{eq:momentum_eq}
    \partial_tu+u\partial_xu=\phi\ast(\rho u)-u(\phi\ast\rho);
\end{equation}
multiplying \eqref{eq:momentum_eq} by the density $\rho$ and summing the equation of the conservation of mass, we obtain
\begin{equation*}
    \partial_t(\rho u)+\partial_x(\rho u^2)=\rho(\phi\ast(\rho u))-\rho u(\phi\ast\rho).
\end{equation*}
Then, the nonlocal ARZ system \eqref{ARZ first} \textcolor{black}{with linear pressure} can be rewritten in the Euler-alignment form \eqref{eq:Euler_alignment}.

System \eqref{eq:Euler_alignment} with measure-valued density and bounded velocity has been studied in \cite{Leslie-Tan}, 
considering a communication protocol $\phi\in L^1_{loc}(\R)$ which is non-negative and even.
Appropriately chosen $\phi$ lead to a long-time behavior referred to as flocking, see \cite{Shvydkoy2021}. 
Moreover, the global regularity theory for smooth solutions has been developed in \cite{Carrillo2016} 
via a critical threshold condition for the regular case, i.e. a communication protocol which is Lipschitz continuous and bounded.
For weakly singular interaction, i.e. $\phi$ with an integrable singularity at the origin, details about smooth solutions can be found 
in \cite{Leslie2020}, while for strongly singular interaction, see \cite{Do2017, Kiselev2018, Shvydkoy2017}.
It is important to notice that system \eqref{eq:Euler_alignment} is the hydrodynamic version of the Cucker-Smale system of ODE's, 
see \cite{Cucker2007}.
The derivation of a kinetic formulation corresponding to the Cucker-Smale system can be found in \cite{Ha2008, Carrillo2010, Mucha2017}.

The purpose of this paper is to prove the well-posedness and stability estimates for system \eqref{ARZ_conservative}, and to provide an entropic selection principle. 
In particular, we develop a global well-posedness theory for weak solutions of \eqref{ARZ_conservative} with measure-valued density and bounded velocity. 
Our approach is based on an adaptation of the sticky particle approximation, originally introduced by Brenier and Grenier \cite{Brenier-Grenier} to analyze 
the 1D pressureless Euler equations and developed in \cite{Leslie-Tan}. 
The novelty with respect to the previous results in \cite{Leslie-Tan} is that we are considering an integrable and bounded kernel function $\phi$ which is not necessarily symmetric (even) nor positive. The shape of its antiderivative $\omega$, see \eqref{phi-def} and  \textbf{(H)}, can be chosen closer to that one of the anisotropic kernel functions usually used in literature for nonlocal traffic flow models such as in \cite{chiarello, chiarello2020micro, ChiarelloGoatin}.\\ 
The choice of a non-symmetric kernel function makes the analysis more involved and interesting; 
moreover it is coherent from the modeling point of view because drivers are supposed to adapt the velocity with respect to what happens 
mainly downstream in the traffic. It is worth quoting the following recent paper \cite{HT2024} in which the authors consider a nonlocal version 
of the ARZ model with nonlocality in the pressure density obtained as a convolution type term between a positive, bounded and non-increasing 
on $[0,\infty)$ kernel function. In particular, \cite{HT2024} is focused on the well-posedness of \textit{smooth solutions} and the phase-plane analysis.

{Other recent related papers are \cite{ACCGK2025}, \cite{CPSZ2024}, \cite{MarconiSpinolo2025} and \cite{WZ2025}.} In \cite{MarconiSpinolo2025} the authors establish existence, uniqueness 
and Ole\u{i}nik estimates, together with the singular limit, for a nonlocal version of the so-called Generalized Aw-Rascle-Zhang model (GARZ) \cite{FanHerty2014}, 
where the drivers' velocity depends on both the empty road velocity and the convolution of the density with an anisotropic kernel. 
In \cite{ACCGK2025} the authors analyze the well-posedness and the singular limit for another nonlocal version of the GARZ model 
in which the nonlocality is present in the flux function as a convolution-type term between the velocity function and an anisotropic kernel function.

{In \cite{CPSZ2024,WZ2025} the authors analyze measure-valued solutions to a non-local dissipative Aw-Rascle-Zhang model and an Euler-alignment system with matrix-valued communication $D^2K$ in a multidimensional setting, highlighting the connection between the two models. Compared to their assumptions, we assume $K$ to be monotone non-decreasing with $K(-\infty)=0$,  $K(+\infty)=1$ and with $K''=\phi \in L^1(\R)\cap L^\infty(\R)$; we do not assume $\phi$ to be even.
The present paper was developed independently of these works.}

The paper is organized as follows. 

- In Section~\ref{Sec:2} we introduce the microscopic approximation based on a nonlocal version of the sticky particle dynamics. 

- In Section~\ref{Sec:3} we introduce a scalar balance law linked to system \eqref{ARZ_conservative}, 
that involves the cumulative quantities $M$ and $Q$ introduced in \eqref{def-M-Q}, 
and we prove the well-posedness of the Cauchy problem for $M$, see \eqref{def:Cauchy_problem} and Theorem~\ref{theorem:entropy-solution}. 
The proof relies on the sticky particle approximation introduced in Section~\ref{Sec:2}.

- In Section~\ref{Sec:4} we use the result in Section~\ref{Sec:3} to prove the existence of a unique solution of system \eqref{ARZ_conservative} 
in the sense of measure and a stability estimate with respect to the initial data, see Theorem~\ref{th:weak_sol_ARZ}. 

Finally, two appendixes collect some analytical tools used in the paper.

\section{Sticky particle Cucker-Smale}\label{Sec:2}
\setcounter{equation}{0}
In this section, we recall the microscopic formulation of the well-known Cucker-Smale system of ODE's 
\cite{Cucker2007} i.e.
\begin{equation}\label{eq:C-S}
\begin{cases}
\dot x_i=v_i,\\
\dot v_i=\displaystyle{\sum_{j=1}^N} m_j\phi(x_i-x_j)(v_j-v_i),
\end{cases}
\end{equation}
assuming that the total mass is 1:
\begin{equation}\label{eq:mass_1}
\sum_{i=1}^N m_i=1.
\end{equation}
System \eqref{eq:C-S}-\eqref{eq:mass_1} allows for particle trajectories to cross. 
To avoid this, we consider its \textit{sticky particle} version, i.e.: if two or more particles collide, meaning that they have the same position 
for the first time, they remain stuck and start to travel together with the same velocity. 
This sticky particle model was originally proposed in \cite{Zeldovich70, Grenier1995ExistenceGP} 
to prove the existence of the so-called `atomic' weak solutions to the 1D pressurless Euler system.
System \eqref{eq:C-S} governs $N$ agents defined by three quantities: mass $m_i \geq 0$,  position $x_i$ 
and velocity $v_i$ for any $i=1,\dots,N$.
\newline
From the fact that the particles remain stuck after a collision, it is useful to define the following sets $J_i$:
\begin{equation*}
    J_i(t)=\{j\in 1,\dots, N: x_j(t)=x_i(t)\},\quad i=1,\dots,N\,,\quad t\ge 0.
\end{equation*}
From the definition of $J_i$, it follows that 
\begin{equation}\label{eq:inclusion}
    J_i(t)\subseteq J_i(s) \quad \mbox{ for } 
    0\le t\le s\,,\quad \forall\, i\in\{1,\dots,N\}.
\end{equation}
Having \eqref{eq:inclusion}, we can order the $N$ particles involved in the system because when two particles collide 
they can not cross each other. Thus, we write
\begin{equation}\label{eq:particle-do-not-cross}
    x_1(t)\leq x_2(t)\leq\dots\leq x_N(t),\quad\forall \,t\geq0.
\end{equation}

Moreover, it is convenient to set notation for the lowest and the highest indices in a given $J_i(t)$, thus
\begin{equation}\label{def:i-star}
    i_*(t)=\min J_i(t),\qquad i^*(t)=\max J_i(t).
\end{equation}
Then, the dynamics is described as follows:

\begin{itemize}
    \item when $t$ is not a collision time, the velocity and the acceleration of particles are governed by the following system  
\begin{equation}\label{eq:C-S omega}
    \begin{cases}
        \dot x_i=v_i,\\
        \dot v_i=\sum_{j=1}^Nm_j\phi(x_i-x_j)(v_j-v_i);
    \end{cases}
\end{equation}

 \item when $t$ is a collision time, the particles involved in a collision remains stuck and start to travel with the same velocity defined by
 \begin{equation}\label{def:common-velocity}
        v_i(t+):=\frac{\sum_{j\in J_i(t)}m_j v_j(t-)}{\sum_{j\in J_i(t)}m_j}.
    \end{equation}
\end{itemize}
We define the quantities $\psi_i$ that are a discrete analog of the quantity $\psi$ defined in \eqref{def:psi} thus we have: for those $t$ that are not collision times,
\begin{equation}\label{def:psi-i}
    \psi_i(t):=v_i(t)+\sum_{j=1}^Nm_j\omega(x_i(t)-x_j(t)).
\end{equation}
We give now  two properties of these quantities $\psi_i$.
\begin{prop}\label{prop:properties-psi_i}
    The following properties hold:
\begin{itemize}
    \item[(i)] If $t$ is not a collision time, then $\frac{d}{dt}\psi_i(t)=0$;
    \item[(ii)] If $t$ is a collision time, it holds
\begin{equation}\label{eq: conservation of rho-psi}
    \psi_i(t)=\frac{\sum_{j\in J_i(t)}m_j\psi_j(t-)}{\sum_{j\in J_i(t)}m_j}\,.
\end{equation}
\end{itemize}
\end{prop}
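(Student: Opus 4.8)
The plan is to prove the two items separately: item (i) by differentiating \eqref{def:psi-i} along the smooth flow \eqref{eq:C-S omega}, and item (ii) by combining the continuity of the particle positions across a collision with the definition \eqref{def:common-velocity} of the post-collision velocity.

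For item (i), since $t$ is not a collision time the maps $x_k,v_k$ are $C^1$ in a neighbourhood of $t$, so using $\omega'=\phi$ and $\dot x_k=v_k$ the chain rule applied to \eqref{def:psi-i} gives
\begin{equation*}
\frac{d}{dt}\psi_i(t) = \dot v_i + \sum_{j=1}^N m_j\,\phi(x_i-x_j)(v_i-v_j).
\end{equation*}
By the second equation of \eqref{eq:C-S omega} the sum on the right-hand side equals $-\dot v_i$, hence $\frac{d}{dt}\psi_i(t)=0$.

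For item (ii), the key observation is that at a collision time only velocities jump while positions stay continuous, so $x_k(t)=x_k(t-)$ for every $k$ and $x_j(t)=x_i(t)$ for every $j\in J_i(t)$. Consequently the convolution-type sum in \eqref{def:psi-i} takes one and the same value
\begin{equation*}
W_i := \sum_{k=1}^N m_k\,\omega\big(x_i(t)-x_k(t)\big)
\end{equation*}
whether it is evaluated along the $i$-th or along any $j$-th particle with $j\in J_i(t)$; thus $\psi_j(t-)=v_j(t-)+W_i$ for all such $j$, while $\psi_i(t)=v_i(t+)+W_i$ (here $\psi_i$ at a collision time is understood with the post-collision velocity $v_i(t+)$). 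Inserting the common velocity from \eqref{def:common-velocity} and factoring $W_i$ through the mass-weighted average,
\begin{equation*}
\psi_i(t)=\frac{\sum_{j\in J_i(t)} m_j v_j(t-)}{\sum_{j\in J_i(t)} m_j}+W_i=\frac{\sum_{j\in J_i(t)} m_j\big(v_j(t-)+W_i\big)}{\sum_{j\in J_i(t)} m_j}=\frac{\sum_{j\in J_i(t)} m_j\,\psi_j(t-)}{\sum_{j\in J_i(t)} m_j},
\end{equation*}
which is \eqref{eq: conservation of rho-psi}.

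Both computations are elementary; the only point requiring a little care is the bookkeeping at a collision time, namely that positions do not jump and that the left limits $v_j(t-)$ and $\psi_j(t-)$ are well defined. The latter follows from item (i) — $\psi_j$ is locally constant on each inter-collision interval — together with the fact that only finitely many collisions occur on any bounded time window, so I do not expect a genuine obstacle.
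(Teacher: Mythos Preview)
Your proof is correct and follows essentially the same approach as the paper: item (i) is handled identically via the chain rule and cancellation with the Cucker--Smale force, and item (ii) by exploiting continuity of positions so that the convolution sum $\sum_k m_k\,\omega(x_j(t)-x_k(t))$ takes the common value $W_i$ for all $j\in J_i(t)$, reducing \eqref{eq: conservation of rho-psi} to the definition \eqref{def:common-velocity}. Your presentation of (ii) is in fact a bit more streamlined than the paper's, which expands both sides of the identity separately before matching terms, but the underlying idea is the same.
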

\begin{proof} 

\underline{Proof of $(i)$}. If $t$ is not a collision time, consider the definition \eqref{def:psi-i} of the $\psi_i$ 
and the definition \eqref{eq:C-S} of $\dot v_i$ by the Cucker-Smale dynamics, $i=1,...,N$.

By evaluating the time derivative and using that $\omega'=\phi$, we write 
\begin{align*}
    \frac{d}{dt}\psi_i(t)&=\frac{d}{dt}\left(v_i(t)+\sum_{j=1}^Nm_j\omega(x_i(t)-x_j(t))\right)\\ 
    &=\dot v_i(t)+\sum_{j=1}^Nm_j\phi(x_i(t)-x_j(t))(v_i(t)-v_j(t))\\
    &=\sum_{j=1}^Nm_j\phi(x_i(t)-x_j(t))(v_j(t)-v_i(t))+\sum_{j=1}^Nm_j\phi(x_i(t)-x_j(t))(v_i(t)-v_j(t))\\
    &=0.
\end{align*}

    \underline{Proof of $(ii)$}. We consider a particle $i\in\{1,\ldots,N\}$ at a collision  time $t$. 
    The aim is to prove that the following equality holds true
    \begin{equation}\label{eq:(ii)}
        \sum_{j\in J_i(t)}m_j\psi_j(t-)=\left(\sum_{j\in J_i(t)}m_j \right) \psi_i(t+).
    \end{equation}
Then, we rewrite the above equation \eqref{eq:(ii)} in the following way
\begin{equation*}
    \sum_{j=i_*(t)}^{i^*(t)}m_j\psi_j(t-)=\left(\sum_{j=i_*(t)}^{i^*(t)}m_j \right)\psi_i(t+).
\end{equation*}
    For any $k\in\{i_*(t),\dots,i^*(t)\}$, by the definition of $\psi_i$ in \eqref{def:psi-i} we have that
    \begin{equation}\label{eq:sum-barycentric}
        m_k\psi_k(t-)=m_kv_k(t-)+m_k\sum_{j=1}^N m_j\omega(x_k(t)-x_j(t)).
    \end{equation}
  Then we consider $x_j(t)$ in place of $x_j(t-)$ for any $j\in\{1,\dots,N\}$ by the fact that the trajectories of particles are continuous.
    We consider the sum over $k$ in the set $J_i(t)$, what we obtain is
    \begin{align}\label{eq:1}
        &\sum_{k=i_*(t)}^{i^*(t)}m_k\psi_k(t-)=\sum_{k=i_*(t)}^{i^*(t)}\Bigr[m_kv_k(t-)+m_k\sum_{j=1}^N m_j\omega(x_k(t)-x_j(t))\Bigl].
    \end{align}
    On the other hand, the term $\displaystyle{\sum_{k=i_*(t)}^{i^*(t)}}m_k\psi_i(t)$ can be rewritten as follows,
    \begin{align}
        \notag
        \sum_{k=i_*(t)}^{i^*(t)}m_k\psi_i(t)&=\sum_{k=i_*(t)}^{i^*(t)}m_k\Bigr( v_i(t)+\sum_{j=1}^N m_j\omega(x_k(t)-x_j(t))\Bigl)\\
        \label{eq:2}
        &=\sum_{k=i_*(t)}^{i^*(t)}m_kv_i(t)+\sum_{k=i_*(t)}^{i^*(t)}m_k\sum_{j=1}^N m_j\omega(x_k(t)-x_j(t)).
    \end{align}
    Using \eqref{eq:1} and  \eqref{eq:2}, we conclude that  
    \begin{equation*}
        \sum_{k=i_*(t)}^{i^*(t)}m_kv_i(t)=\sum_{k=i_*(t)}^{i^*(t)} m_kv_k(t-).
    \end{equation*}
    Then, the common velocity is
    \begin{equation*}
        v_i(t)=\frac{\sum_{k=i_*(t)}^{i^*(t)}m_kv_k(t-)}{\sum_{k=i_*(t)}^{i^*(t)}m_k},
    \end{equation*}
    and hence \eqref{eq: conservation of rho-psi} follows.
\end{proof}
We remark that the property in ${(ii)}$ is analogous to \eqref{eq:cons-of-psi}, 
indeed we have the conservation of the quantity $\psi$ along the trajectories:
\begin{equation*}
    0=\partial_t\psi+u\partial_x\psi=\frac{d}{dt}\psi\,.
\end{equation*}

\begin{prop}\label{prop2.2}
    Suppose $(x_i(t),v_i(t))_{i=1}^N$ follow the sticky particle Cucker-Smale dynamics 
    \eqref{eq:C-S omega}-\eqref{def:common-velocity} associated to the data $(x_i^0,v_i^0,m_i)_{i=1}^N$ 
    with the masses such that $\displaystyle{\sum_{i=1}^N} m_i=1$. Then, if we define 
    $$
    \psi_i^0:=v_i^0+\sum_{j=1}^N m_j \omega(x_i^0-x_j^0),\qquad i=1,\dots,N,
    $$
    and
    $$
    \underline{\psi}:=\min_{1\leq j\leq N}\psi_j^0,\qquad \bar{\psi}:=\max_{1\leq j\leq N}\psi_j^0,
    $$
    it holds
    \begin{equation}\label{eq:boundnedness psi}
         \psi_i(t)\in [\underline\psi,\bar\psi],\quad\forall i=1,\dots,N,\quad\forall t\geq0.
    \end{equation}
    Moreover,
    \begin{equation}\label{eq:boundedness v}
        \underline\psi-\|\omega\|_{L^\infty(\R)}\leq v_i(t)\leq \bar\psi,\quad\forall i=1,\dots,N,\quad t\geq0.
    \end{equation}
    Consequently, if $\{x_i^0\}_{i=1}^N\subset [-R^0,R^0]$, then
    \begin{equation}\label{eq:boundedness x}
        \{x_i(t)\}_{i=1}^N\subset[-R(t),R(t)],\quad R(t):=R^0+t\tilde M,\quad\forall t\geq0,
    \end{equation}
    where 
    $$
    \tilde M:=\max\{\,|\underline\psi-\|\omega\|_{L^\infty(\R)}|,\,|\bar\psi|\,\}.
    $$
\end{prop}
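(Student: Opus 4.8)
The structure of \eqref{eq:boundnedness psi} is the natural candidate for an induction-on-collisions argument: between collisions, Proposition~\ref{prop:properties-psi_i}(i) gives $\frac{d}{dt}\psi_i = 0$, so each $\psi_i$ is constant; at a collision time $t$, Proposition~\ref{prop:properties-psi_i}(ii) expresses the new common value $\psi_i(t)$ as a convex combination (a weighted barycentric average with weights $m_j/\sum_{j\in J_i}m_j$) of the pre-collision values $\psi_j(t-)$. Since a convex combination of numbers in $[\underline\psi,\bar\psi]$ stays in $[\underline\psi,\bar\psi]$, and the collision times are a locally finite increasing sequence (there are at most $N-1$ genuine merging events, as the number of distinct clusters is nonincreasing and strictly decreases at each collision), a straightforward induction starting from $\psi_i(0)=\psi_i^0\in[\underline\psi,\bar\psi]$ yields \eqref{eq:boundnedness psi} for all $t\ge 0$ and all $i$.

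For \eqref{eq:boundedness v}, I would simply solve the definition \eqref{def:psi-i} for $v_i$, writing
\[
  v_i(t) = \psi_i(t) - \sum_{j=1}^N m_j\,\omega\bigl(x_i(t)-x_j(t)\bigr).
\]
The convolution sum is a nonnegative quantity (since $\omega\ge 0$ by \textbf{(H)}) bounded above by $\sum_j m_j \|\omega\|_{L^\infty} = \|\omega\|_{L^\infty}$ (using $\sum_j m_j = 1$). Combined with \eqref{eq:boundnedness psi}, this gives $\underline\psi - \|\omega\|_{L^\infty} \le v_i(t) \le \bar\psi$; at collision times one either uses continuity of $x_i$ together with the convex-combination formula \eqref{def:common-velocity}, or simply notes the bound holds on each side of the jump.

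Finally, \eqref{eq:boundedness x} follows by integrating $\dot x_i = v_i$: since $|v_i(t)| \le \max\{|\underline\psi - \|\omega\|_{L^\infty}|, |\bar\psi|\} = \tilde M$ for all $t$ by \eqref{eq:boundedness v}, we get $|x_i(t) - x_i^0| \le \tilde M\, t$, and with $x_i^0\in[-R^0,R^0]$ this gives $x_i(t)\in[-R^0 - \tilde M t, R^0 + \tilde M t] = [-R(t),R(t)]$. Here one should be slightly careful that $x_i$ is only piecewise $C^1$ (Lipschitz), with the velocity jumping at collisions; but the Lipschitz bound $|\dot x_i| \le \tilde M$ holds a.e., so $x_i$ is $\tilde M$-Lipschitz and the integral estimate is valid.

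The only mildly delicate point — the \emph{main obstacle}, such as it is — is justifying that the collision times form a sequence one can induct over (local finiteness) and handling the bookkeeping of $J_i(t)$, $i_*(t)$, $i^*(t)$ across a collision; once that is in place, every step is an elementary estimate (convexity of barycentric averages, $\|\omega\|_{L^\infty}$ bound on the convolution, Lipschitz integration). I expect no analytical difficulty beyond this.
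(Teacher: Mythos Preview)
Your proposal is correct and follows essentially the same approach as the paper: the paper also argues \eqref{eq:boundnedness psi} from the constancy of $\psi_i$ between collisions together with the barycentric averaging at collisions (Proposition~\ref{prop:properties-psi_i}), then derives \eqref{eq:boundedness v} by solving \eqref{def:psi-i} for $v_i$ and using $\omega\ge 0$ with $\sum_j m_j=1$, and finally obtains \eqref{eq:boundedness x} from the uniform velocity bound. Your write-up is in fact slightly more careful than the paper's in flagging the local finiteness of collision times and the Lipschitz nature of the trajectories, but the underlying argument is the same.
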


\begin{proof}
    We start with the proof of \eqref{eq:boundnedness psi}. By the conservation of  $\psi_i$ 
    given by \eqref{eq: conservation of rho-psi}
    \begin{itemize}
    	\item  when $t$ is a collision time we can conclude that
    \begin{equation*}
        \psi_i(t+)\geq\min_{1\leq j\leq N} \psi_j(t-);
    \end{equation*}
    \item when $t$ is not a collision time, by the fact that $\frac{d}{dt}\psi_i(t)=0$ we can conclude that
    \begin{equation*}
        \psi_i(t)\geq\underline\psi,\quad\forall i\in\{1,\dots,N\},\quad\forall t\geq0.
    \end{equation*}
    \end{itemize}
   Applying a similar argument we end up with 
    \begin{equation*}
        \psi_i(t)\leq\bar\psi,\quad\forall i\in\{1,\dots,N\},\quad\forall t\geq0,
    \end{equation*}
    thus, \eqref{eq:boundnedness psi} is proved.\newline
    Now we prove \eqref{eq:boundedness v}. 
    From the definition of $\psi_i$ in \eqref{def:psi-i}, we can write
    $$
    v_i(t)=\psi_i(t)-\sum_{j=1}^Nm_j\omega(x_i(t)-x_j(t)),
    $$
    and notice that
    \begin{equation}\label{eq:bound_above}
    \psi_i(t)-\sum_{j=1}^Nm_j\omega(x_i(t)-x_j(t))\leq\bar\psi,
    \end{equation}
    due to $\omega(x)\geq0$ for all $x\in\R$.
    Moreover, thanks to \eqref{eq:mass_1} we can write 
    \begin{equation}\label{eq:estimate_below}
    \psi_i(t)-\sum_{j=1}^Nm_j\omega(x_i(t)-x_j(t))\geq \underline\psi-\|\omega\|_{L^\infty(\R)}.
    \end{equation}
    Using \eqref{eq:bound_above}-\eqref{eq:estimate_below} it is possible to see that the absolute values of the velocities are always less or equal then 
    \begin{equation*}
        \tilde M:=\max\{\,|\underline\psi-\|\omega\|_{L^\infty(\R)}|,|\bar\psi|\}.
    \end{equation*}
    Defining $R(t):=R^0+t\tilde M$, \eqref{eq:boundedness x} holds true.
\end{proof}

The last property to prove about the sticky particle system is the following lemma.
\begin{lemma}[\textbf{The barycentric lemma}]\label{lemma: barycentric}
    Fix an $i\in\{1,\dots,N\}$ and a time $t>0$. For any $k\in J_i(t)$ the following holds
    \begin{equation}\label{barycentric equation}
        \frac{\sum_{j=i_*(t)}^km_j\psi_j(t-)}{\sum_{j=i_*(t)}^km_j}\geq
                \frac{\sum_{j\in J_i(t)}m_j\psi_j(t-)} {\sum_
          {{\color{black}j\in J_i(t)}} m_j}=\psi_i(t+)\geq\frac{\sum_{j=k}^{i^*(t)}m_j\psi_j(t-)}{\sum_{j=k}^{i^*(t)}m_j}
    \end{equation}
    {\color{black} where $i_*(t)$, $i^*(t)$ are defined in \eqref{def:i-star}.}
\end{lemma}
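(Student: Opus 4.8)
The plan is to reduce \eqref{barycentric equation} to the single claim that the incoming values $\psi_j(t-)$ are non-increasing in $j$ along the cluster $J_i(t)=\{i_*(t),\dots,i^*(t)\}$, that is
\[
 \psi_{i_*(t)}(t-)\ \ge\ \psi_{i_*(t)+1}(t-)\ \ge\ \cdots\ \ge\ \psi_{i^*(t)}(t-).
\]
Granting this, the three–term chain \eqref{barycentric equation} is elementary: the middle equality is exactly Proposition~\ref{prop:properties-psi_i}(ii), while the two inequalities express the standard fact that, for a non-increasing finite sequence carrying non-negative masses, the mass-weighted average over any initial segment $\{i_*(t),\dots,k\}$ dominates the average over the whole cluster, which in turn dominates the average over any final segment $\{k,\dots,i^*(t)\}$ — a barycentre lies between the barycentres of a ``left'' and a ``right'' block once the left values are at least the right ones.

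I would first establish the analogous ordering for the incoming velocities, $v_{i_*(t)}(t-)\ge\cdots\ge v_{i^*(t)}(t-)$, from the no-crossing property. Fix consecutive indices $j,j+1\in J_i(t)$ and set $g(s):=x_j(s)-x_{j+1}(s)$. By \eqref{eq:particle-do-not-cross}, $g(s)\le 0$ for every $s\ge 0$, whereas $g(t)=0$ because $x_j(t)=x_{j+1}(t)=x_i(t)$. Away from collision times the trajectories are $C^1$ with $\dot x_j=v_j$, so $g$ is $C^1$ on some interval $(t-\delta,t)$ with $g'=v_j-v_{j+1}$ there, and $v_j$ has a left limit at $t$. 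By the mean value theorem, for $s\in(t-\delta,t)$ we have $\frac{-g(s)}{t-s}=\frac{g(t)-g(s)}{t-s}=g'(\xi_s)$ for some $\xi_s\in(s,t)$; the left side is $\ge 0$, so letting $s\to t-$ gives $v_j(t-)-v_{j+1}(t-)=g'(t-)\ge 0$. Transitivity over consecutive pairs yields the ordering of the $v_j(t-)$.

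It then remains to pass from $v$ to $\psi$. Letting $s\to t-$ through non-collision times in \eqref{def:psi-i}, and using that the trajectories are continuous and $\omega$ is (Lipschitz, hence) continuous, we obtain for every $j\in J_i(t)$
\[
 \psi_j(t-)=v_j(t-)+\sum_{l=1}^N m_l\,\omega\bigl(x_i(t)-x_l(t)\bigr)=:v_j(t-)+W,
\]
where the constant $W$ is independent of $j\in J_i(t)$ precisely because $x_j(t)=x_i(t)$ for all such $j$. Hence $\psi_j(t-)$ inherits the monotonicity of $v_j(t-)$ along $J_i(t)$, and combined with the first paragraph this proves \eqref{barycentric equation}.

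The only delicate point is the passage to one-sided limits at the collision time $t$: one needs that collision times do not accumulate at $t$ from below, which holds because every collision strictly decreases the finite number of distinct particle positions, so there are at most $N-1$ collision times in total. Beyond that, the statement is pure manipulation of weighted averages, so I do not anticipate a real obstacle.
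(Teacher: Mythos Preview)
Your proof is correct and follows essentially the same route as the paper: reduce \eqref{barycentric equation} to the monotonicity $\psi_{i_*(t)}(t-)\ge\cdots\ge\psi_{i^*(t)}(t-)$, obtain this from the velocity ordering $v_{i_*(t)}(t-)\ge\cdots\ge v_{i^*(t)}(t-)$ via the observation that the $\omega$-sum in \eqref{def:psi-i} is the same for every $j\in J_i(t)$, and deduce the velocity ordering from the no-crossing property \eqref{eq:particle-do-not-cross}. The paper spells out the weighted-average algebra in more detail while simply declaring the velocity ordering ``obvious''; you do the reverse, supplying the mean-value argument for the velocities and invoking the averaging fact as standard --- both are fine.
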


\begin{proof}
To prove the lemma it is sufficient to establish the following monotonicity property
    \begin{equation}\label{monotonicity of psi}
        \psi_{i_*(t)}(t-)\geq\dots\geq\psi_{i^*(t)}(t-).
    \end{equation}
     Let us show how we obtain \eqref{barycentric equation} from \eqref{monotonicity of psi}.\newline
    Without loss of generality we suppose to have some quantities $b_i$ and $\alpha_i$ for $i=1,\dots,M$ such that
    \begin{equation}\label{eq:monotonicity_b_i}
       b_1\ge b_2\ge\dots\ge b_M, \quad\sum_{i=1}^M\alpha_i=C,\quad C\in(0,1]\,.
    \end{equation}   
    We only prove the first inequality in \eqref{barycentric equation}, the proof is similar for the second one. 
    We consider a $\bar k\in\{1,\dots,M\}$ and the aim is to show that
    \begin{equation*}
        \frac{\sum_{j=1}^{\bar k}\alpha_jb_j}{\sum_{j=1}^{\bar k}\alpha_j}\geq\frac{\sum_{j=1}^M\alpha_jb_j}
        {\sum_{j=1}^M\alpha_j}=\frac{\sum_{j=1}^M\alpha_jb_j}{C},
    \end{equation*}
    that is equivalent to
    \begin{equation*}
        C\sum_{j=1}^{\bar k}\alpha_jb_j\geq\left(\sum_{j=1}^{\bar k}\alpha_j\right)\left(\sum_{j=1}^M\alpha_jb_j\right)
        =\left(\sum_{j=1}^{\bar k}\alpha_j\right)\left(\sum_{j=1}^{\bar k}\alpha_jb_j+\sum_{j=\bar k+1}^M\alpha_jb_j\right),
    \end{equation*}
    and this is true if and only if the following inequality holds true
    \begin{equation*}
        \left(C-\sum_{j=1}^{\bar k}\alpha_j\right)\left(\sum_{j=1}^{\bar k}\alpha_jb_j \right)\geq 
        \left(\sum_{j=1}^{\bar k}\alpha_j\right)\left(\sum_{j=\bar k+1}^M\alpha_jb_j\right).
    \end{equation*}
    We know that $\sum_{j=1}^M\alpha_j=C,$ then putting it in the last inequality we obtain the following
    \begin{equation*}
        \left(\sum_{j=1}^{\bar k}\alpha_jb_j\right)\left(\sum_{j=\bar k+1}^M\alpha_j\right)
        \geq\left(\sum_{j=1}^{\bar k}\alpha_j\right)\left(\sum_{j=\bar k+1}^M\alpha_jb_j\right).
    \end{equation*}
    Writing explicitly the sums, we have
    \begin{equation*}
        \left(\alpha_1b_1+\dots+\alpha_{\bar k}b_{\bar k}\right)\left(\sum_{j=\bar k+1}^M\alpha_j\right)
        \geq\left(\alpha_1+\dots+\alpha_{\bar k}\right)\left(\sum_{j=\bar k+1}^M\alpha_jb_j\right).
    \end{equation*}
    From \eqref{eq:monotonicity_b_i}  we obtain that the inequality holds true. 
    Substituting the $b_i$'s and the $\alpha_i$'s respectively with the $\psi_i(t-)$'s and the $m_i$'s, 
    we get \eqref{barycentric equation} if \eqref{monotonicity of psi} holds true.
    \newline
    It remains to prove \eqref{monotonicity of psi}, i.e. at a collision time $t$ one has
 \begin{equation*}
        \psi_{k}(t-)\geq\psi_{k+1}(t-)\qquad \forall\, k = i_*(t), \ldots, i^*(t)-1\,.
    \end{equation*}
    We know that for any $j\in J_i(t)$ we have
    $$
    \psi_k(t-)=v_k(t-)+\sum_{j=1}^N m_j\omega(x_k(t)-x_j(t)).
    $$
    Then, it is possible to conclude that \eqref{monotonicity of psi} follows 
    from the obvious monotonicity property of the velocities, i.e.
    \begin{equation*}
        v_{i_*(t)}(t-)\geq\dots\geq v_{i^*(t)}(t-),
    \end{equation*}
   that is true due to the fact that $t$ is a collision time. This completes the proof.
\end{proof}

\section{The scalar balance law}\label{Sec:3}
\setcounter{equation}{0}
Inspired by the work of Brenier and Grenier \cite{Brenier-Grenier} and the work of Leslie and Tan \cite{Leslie-Tan}, 
we prove the well-posedness of a scalar balance law linked to \eqref{ARZ_conservative}, 
and we use its entropy condition as selection principle for the uniqueness of the solution to \eqref{ARZ_conservative}. 
To do this, we define two new quantities, which are respectively the antiderivatives of $\rho$ and $\rho\psi$,
\begin{equation}\label{def-M-Q}
    M(x,t):=\int_{-\infty}^x \rho(y,t)\,dy,\qquad Q(x,t):=\int_{-\infty}^x (\rho\psi)(y,t)\,dy.
\end{equation}
Integrating \eqref{ARZ_conservative} in the space variable, we verify that $M$ and $Q$ satisfy 
\begin{equation}\label{eq-M-Q}
    \partial_t M+u\partial_xM=0,\qquad \partial_t Q+u\partial_x Q=0.
\end{equation}
We observe that $M$ is a monotone function, being $\rho(x,t)$ \textcolor{black}{non negative,} \textcolor{black}{and so choosing an initial data}
\begin{equation}\label{eq:initial-data-M} 
(M,Q)(x,0)=(M^0, Q^0),
\end{equation}
\textcolor{black}{such that $Q^0$ is constant in the intervals where $M^0$ is also constant, it is possible to define a map $A$ such that
such that $A(M^0(x))=Q^0(x)$ for any $x\in\R$, moreover 
if $(x,t)$ lies on a characteristic path 
generated by $(x_0,0)$ with velocity field $u$, then
\begin{equation*}
    Q(x,t)=Q^0(x_0)=A(M^0(x_0))=A(M(x,t)).
\end{equation*}
}
Using this relation and the definition of $\psi$, we get
\begin{align*}
    u\partial_x M &= (\psi-\omega\ast\rho)\rho=\psi\rho-(\omega\ast\rho)\rho \\
    &=\partial_x(A(M))-(\omega\ast\partial_xM)\partial_xM=\partial_x(A(M))-(\omega'\ast M)\partial_x M \\
    &=\partial_x(A(M))-(\phi\ast M)\partial_x M.
\end{align*}
Substituting this last equation in \eqref{eq-M-Q}$_1$, 
we obtain a scalar balance law for $M$,
\begin{equation}\label{scalar balance law}
    \partial_tM+\partial_x(A(M))=(\phi\ast M)\partial_x M,
\end{equation}
where we use 
$
\phi\ast M=\omega\ast\rho.
$
Indeed we have
\begin{align*}
\phi\ast M(x,t)&=\int_{-\infty}^{\infty}\phi(y)M(x-y,t)\,dy\\
&=\omega(y)M(x-y)|_{-\infty}^{\infty}+\int_{-\infty}^{\infty}\omega(y)\rho(x-y,t)\,dy=\omega\ast\rho(x,t).
\end{align*}
We define an entropy/entropy flux pair $(\eta, q)$ as a pair of functions $\eta=\eta(M)$ and $q=q(M)$ such that
$$
\eta:[0,1]\to\R,\qquad  q:[0,1]\to\R,\qquad q'=\eta'A',
$$
with $\eta$ Lipschitz and convex. The entropy inequality is
\begin{equation}\label{eq:entropy_inequality}
\int_0^\infty\int_\R \partial_t\varphi(\eta(M))+\partial_x\varphi(q(M))\,dx\,dt+\int_0^T\int_\R\varphi(\phi\ast M)\,d\partial_x(\eta(M))\,dt\geq0,
\end{equation}
where $\varphi\in C_c^\infty(\R\times[0,\infty))$ is a test function.
\begin{rem}
The first integral of the inequality can be interpreted in the sense of distribution, while in the second term $\partial_x(\eta(M))$ is a Radon measure. 
Thanks to \cite[Th. 4.19 (Young)]{Cannarsa-D'Aprile} we know that the term $\phi\ast M$ is in $L^\infty(\R)\cap C^0(\R)$, then
the integral
\begin{equation}\label{def:int-eta}
\int_0^\infty\int_\R \varphi(\phi\ast M)\,d(\partial_x(\eta(M))\,dt
\end{equation}
is well defined by the fact that $\partial_x(\eta(M))$ is a Radon measure, $\phi\ast M\in L^\infty(\R)\cap C^0(\R)$ and $\varphi\in C_c^\infty(\R)$,  thus the product $\varphi(\phi\ast M)$ has compact support.
\end{rem}
\begin{theorem}
    Let $\Omega\subset\R^N$ be an open set. Let $u\in BV_{loc}(\Omega)$, then there exists a unique 
    $\R^N$-valued Radon measure $\mu$ such that 
    $$
    \int_\Omega u \Div  \tilde\varphi\,dx=-\int_\Omega \tilde\varphi\,d\mu,\qquad \forall \tilde\varphi\in C_c^1(\Omega).
    $$
\end{theorem}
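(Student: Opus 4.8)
The statement is exactly the Riesz-type representation of the distributional gradient of a $BV_{loc}$ function, so the plan is to exhibit the left-hand side as a locally bounded linear functional on compactly supported continuous vector fields and then invoke the (vector-valued) Riesz--Markov representation theorem.

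First I would recall the defining property of $BV_{loc}(\Omega)$: $u\in L^1_{loc}(\Omega)$ and, for every open set $\Omega'$ with $\overline{\Omega'}$ compact and contained in $\Omega$,
\[
V(u,\Omega'):=\sup\left\{\int_{\Omega'}u\,\Div\tilde\varphi\,dx\ :\ \tilde\varphi\in C_c^1(\Omega';\R^N),\ \|\tilde\varphi\|_{L^\infty}\le 1\right\}<+\infty.
\]
Then I would define the linear functional $L$ on $C_c^1(\Omega;\R^N)$ by $L(\tilde\varphi):=-\int_\Omega u\,\Div\tilde\varphi\,dx$, which is well defined because $u$ is locally integrable and $\Div\tilde\varphi$ has compact support. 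The key estimate is local boundedness: if $\tilde\varphi$ is supported in a compact $K\subset\Omega$, pick an open $\Omega'$ with $K\subset\Omega'\Subset\Omega$; applying the supremum above to $\tilde\varphi/\|\tilde\varphi\|_{L^\infty}$ gives $|L(\tilde\varphi)|\le V(u,\Omega')\,\|\tilde\varphi\|_{L^\infty}$. Thus $L$ is continuous for uniform convergence on each fixed compact subset.

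Next I would extend $L$ to all of $C_c(\Omega;\R^N)$. Given $\tilde\psi\in C_c(\Omega;\R^N)$ with $\supp\tilde\psi\subset K$, mollification yields $\tilde\varphi_\varepsilon\in C_c^1(\Omega;\R^N)$, all supported in a fixed slightly larger compact $K'\supset K$, with $\tilde\varphi_\varepsilon\to\tilde\psi$ uniformly; by the bound above $(L(\tilde\varphi_\varepsilon))_\varepsilon$ is Cauchy, so $L(\tilde\psi):=\lim_\varepsilon L(\tilde\varphi_\varepsilon)$ is a well-defined linear extension still satisfying $|L(\tilde\psi)|\le C_{K'}\,\|\tilde\psi\|_{L^\infty}$. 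Hence $L$ is a Radon (locally bounded) linear functional on $C_c(\Omega;\R^N)$, and the Riesz--Markov representation theorem provides a unique $\R^N$-valued Radon measure $\mu$ with $L(\tilde\varphi)=\int_\Omega\tilde\varphi\,d\mu$ for all $\tilde\varphi\in C_c(\Omega;\R^N)$; restricting to $C_c^1$ gives the displayed identity. Uniqueness follows since any two $\R^N$-valued Radon measures agreeing against all $C_c^1$ vector fields agree against all $C_c$ vector fields by density, and testing componentwise against $\tilde\varphi=(0,\dots,g,\dots,0)$ with $g\in C_c(\Omega)$ forces them to coincide.

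I expect the only genuine subtlety to be the passage from $C_c^1$ to $C_c$ while keeping the mollified fields inside one common compact subset of $\Omega$ (so the bounding constant stays uniform); the representation step is the classical Riesz--Markov theorem used componentwise, and the $BV_{loc}$ hypothesis is precisely what delivers the local boundedness estimate that makes the theorem applicable.
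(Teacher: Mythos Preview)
The paper does not actually prove this theorem: it is stated as a known structural fact about $BV_{loc}$ functions and immediately used, with no \texttt{proof} environment attached. So there is no argument in the paper to compare your proposal against.

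Your proof is the standard one and is correct. You correctly identify that the $BV_{loc}$ hypothesis is exactly the local boundedness of the functional $L(\tilde\varphi)=-\int_\Omega u\,\Div\tilde\varphi\,dx$ on each compact, you extend by density to $C_c(\Omega;\R^N)$ while controlling the supports of the mollified fields, and you then apply the vector-valued Riesz--Markov theorem componentwise. The uniqueness argument via density of $C_c^1$ in $C_c$ is also fine. There are no gaps; this is precisely the argument one finds in standard references such as Ambrosio--Fusco--Pallara, which the paper itself cites elsewhere.
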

Thanks to this result we rewrite \eqref{def:int-eta} as
$$
\int_0^\infty\int_\R \varphi(\phi\ast M)\partial_x(\eta(M))\,dx\,dt,
$$
and so we can consider \eqref{eq:entropy_inequality} in distributional sense. We provide the definition of entropy weak solution to \eqref{eq:initial-data-M}-\eqref{scalar balance law}.
\begin{definition}\label{def:entropy_sol_M}
    Let $M:\R\times[0,T]\to\R$, with $T>0$, we say that $M$ is an entropy weak solution of \eqref{scalar balance law}-\eqref{eq:initial-data-M} 
    if it satisfies the following conditions:
    \begin{itemize}
        \item \textcolor{black}{$(M,Q)(x,0)=(M^0,Q^0)$;}
        \item the inequality \eqref{eq:entropy_inequality} is satisfied for any entropy/entropy-flux pair $(\eta,q)$ in the sense discussed above;
        \item $M(\cdot,t)$ is a nondecreasing function for $t\in[0,T]$;
        \item there exists $R(T)>0$ such that $M(x,t)=0$ if $x<-R(T)$ and $M(x,t)=1$ if $x>R(T)$ for any $t\in[0,T]$.
    \end{itemize}
\end{definition}
It is possible to extend Definition \ref{def:entropy_sol_M} of entropy weak solution to $\R\times\R_+$, 
requiring that for any constant $\bar T>0,$ $M(x,t)$ is an entropy weak solution in $\R\times[0,\bar T]$.
\newline
Now, we state the Rankine-Hugoniot condition and the Oleinik entropy condition. Assume that $M$ takes the values 
$M_l$ and $M_r$ on the two sides of a shock along a curve $C=\{(x,t):x=x(t)\}$, and denote by $s(t)=\dot x(t)$ the shock-speed. 
The entropy condition becomes
\begin{equation}\label{entropy_cond1}
\int_C ([\eta(M)]n_2(s)+([q(M)]-\phi\ast M)[\eta(M)])n_1(s))\varphi\,ds\geq0,
\end{equation}
where $n=(n_1,n_2)=\frac{1}{\sqrt{1+s^2}}(1,-s)$, with the notation $[f]=f(M_l)-f(M_r)$. 
Substituting $n_1$ and $n_2$ in \eqref{entropy_cond1}, we get
\begin{equation}\label{eq:Rh_1}
(s+(\phi\ast M))[\eta(M)]\leq[q(M)],
\end{equation}
along the curve $C$. Choosing $\eta(x)=x$ and $q(M)=A(M)$, the inequality \eqref{eq:Rh_1} becomes an equality, 
then we have the following Rankine-Hugoniot condition
\begin{equation}\label{eq:RH_M}
s+\phi\ast M=\frac{[A(M)]}{[M]}.
\end{equation}
To get the Oleinik entropy condition, we consider the following.
If $M_l<M_r$, we take $\eta(m)=(m-\theta)H(m-\theta)$ and $q(m)=(A(m)-A(\theta))H(m-\theta)$ with $\theta\in(M_l,M_r),$ 
where the function $H$ is the right-continuous Heaviside function
\begin{equation*}
    H(x)=
    \begin{cases}
        1,\quad x\geq 0,\\
        0,\quad x<0.
    \end{cases}
\end{equation*}
Then, we get
\begin{equation}\label{eq:oleinik}
s+\phi\ast M\leq\frac{A(\theta)-A(M_l)}{\theta-M_l}.
\end{equation}

\subsection{\color{black}Construction of approximated solutions%
\label{sec:scalar_balance_law}
}
\textcolor{black}{In this subsection we investigate the existence of entropy weak solutions to the scalar balance law \eqref{scalar balance law} and 
we start by studying the following discretized formulation:}
\begin{equation}\label{eq:discretized scalar balance law}
    \partial_t M_N+\partial_x(A_N(M_N))=(\phi\ast M_N)\partial_x M_N 
\end{equation} 
together with initial datum
\begin{equation}\label{eq:discretized scalar balance law-init data}
   M_N(x,0)=M_N^0(x)\,.
\end{equation}
Here the flux function $A_N$ is a continuous and piecewise linear function with breakpoints at $\theta_i$ for $i=1,\dots,N-1$:
\begin{equation}\label{def: A_N}
    A_N:[0,1]\rightarrow \R, \quad \mbox{$A_N'$ is constant in each interval $(\theta_{i-1},\theta_i$)}, \quad
    i=1,\dots,N.
\end{equation}
\textcolor{black}{The main purpose of this subsection is to construct 
an entropy weak solution to 
\eqref{eq:discretized scalar balance law}-\eqref{eq:discretized scalar balance law-init data} 
by means of 
the sticky particle Cucker-Smale dynamics.}

To start, let's consider a piecewise constant initial datum $M_N^0$ as follows:
\begin{equation}\label{def: M_N^0}
    M_N^0(x)=\sum_{j=1}^N m_jH(x-x_j^0),
\end{equation}
where $x_1^0\leq x_2^0\leq ...\leq x^0_N$ are the initial positions of particles in the sticky particle 
Cucker-Smale dynamics \eqref{eq:C-S}-\eqref{eq:C-S omega}-\eqref{def:common-velocity}, 
and the masses are such that $m_j>0$ 
for any $j=1,\dots,N$ and $\displaystyle{\sum_{j=1}^N} m_j=1$. 
\newline
It is worth noticing that by construction $M_N^0$ assume a finite number of values, indeed the range of $M_N^0$ 
is discrete and given by the values $\theta_i,\, i\in\{0,...,N\}$, defined in this way:
\begin{equation}\label{def: theta_i}
   \theta_0=0\,,\qquad \theta_i= \theta_{i-1}+ m_i = \sum_{j=1}^i m_j\,,\qquad i=1,\ldots,N.
\end{equation}
Later we will see that also the solution $M_N$ assumes a finite number of values.
\newline
Under these hypotheses, \textcolor{black}{we prove the existence of an entropy weak solution to \eqref{eq:discretized scalar balance law}-\eqref{eq:discretized scalar balance law-init data} with initial data as in \eqref{def: M_N^0}}.

\begin{theorem}\label{th:solution_MN}
\textcolor{black}{Let $A_N$ be a flux function satisfying \eqref{def: A_N}, and consider $M_N^0$ as in \eqref{def: M_N^0}. 
Define}
\begin{equation}\label{eq: psi_i^0}
    m_i\psi_i^0:=A_N(\theta_i)-A_N(\theta_{i-1})\,,\qquad i=1,\dots,N
\end{equation}
and
\begin{equation}
    v_i^0:=\psi_i^0-\sum_{j=1}^N m_j\omega(x_i^0-x_j^0).
\end{equation}
Suppose that the couples $(x_i(t),v_i(t))_{i=1}^N$ follow the sticky particle Cucker-Smale dynamics 
\eqref{eq:C-S}-\eqref{eq:C-S omega}-\eqref{def:common-velocity} with masses $(m_i)_{i=1}^N$ and initial conditions $(x_i^0,v_i^0)_{i=1}^N$. 
Then, the function
\begin{equation}\label{def: M_N solution}
    M_N(x,t)=\sum_{i=1}^Nm_iH(x-x_i(t))
\end{equation}
is an entropy weak solution of the discretized Cauchy problem \eqref{eq:discretized scalar balance law}-\eqref{eq:discretized scalar balance law-init data} 
satisfying the following:
\begin{itemize}
    \item[\textbf{(a)}] $M_N(x,t)=0$ for $x<-R(T)$ and $M_N(x,t)=1$ for $x>R(T)$ and $\forall t\in [0,\infty)$;
    \item[\textbf{(b)}] it holds true that
\begin{equation}\label{eq: A_N(M_N)}
    A_N\circ M_N(x,t)=\sum_{i=1}^Nm_i\psi_i(t)H(x-x_i(t));
\end{equation}
\item[\textbf{(c)}] at points of discontinuity $x_i(t)$, $i=1,\dots,N$, we have 
\begin{equation}\label{eq:eq_discontinuity}
        v_i(t)+\phi\ast M_N(x_i(t),t)=\frac{A_N(\theta_{i^*(t)})-A_N(\theta_{i_*(t)-1})}{\theta_{i^*(t)}-\theta_{i_*(t)-1}},
    \end{equation}
namely, the Rankine-Hugoniot condition is satisfied;
 \item[\textbf{(d)}] at the points of discontinuity the following inequality holds true
\begin{equation}\label{eq:ineq_discontinuity}
        v_i(t)+\phi\ast M_N(x_i(t),t)\leq\frac{A_N(\theta)-A_N(\theta_{i_*(t)-1})}{\theta-\theta_{i_*(t)-1}},
    \end{equation}
with $\theta\in(\theta_{i_*(t)-1},\theta_{i^*(t)})$. Thus the Oleinik entropy condition is satisfied.
\end{itemize}
\end{theorem}
\begin{proof}
\textcolor{black}{
We show that the function $M_N(x,t)$ defined in \eqref{def: M_N solution} is an entropy weak solution to \eqref{eq:discretized scalar balance law}-\eqref{eq:discretized scalar balance law-init data}, namely, we prove that it satisfies points \textbf{(a), (b), (c)}, and \textbf{(d)} of the statement. The proof is divided into steps for a better readability.
}
\begin{itemize}   
    \item[\textbf{(a)}] This point holds true by construction, indeed
    $$
    M_N(x,t)=\sum_{i=1}^N m_i H(x-x_i(t)),
    $$
    and, by Proposition \ref{prop2.2}, we have $\{x_i(t)\}_{i=1}^N\subset [-R(T),R(T)],$ then \textbf{(a)} holds true.
    \item[\textbf{(b)}]
    The case $x<x_1(t)$ is trivial.
    For any $x\geq x_1(t)$,  let us consider the highest index $k\in\{1,\dots,N\}$, such that $x_k(t)\leq x<x_{k+1}(t)$, \textcolor{black}{with $x_{N+1}(t)=+\infty$,} then we have 
    \begin{equation}\label{eq:An_Mn}
        A_N(M_N(x,t))=\sum_{j=1}^k (A_N(\theta_i)-A_N(\theta_{i-1}))=\sum_{j=1}^k m_j\psi_i^0,
    \end{equation}
    moreover, from \eqref{eq: conservation of rho-psi} we get
    \begin{equation}\label{eq:sum_conserv}
        \sum_{i=1}^k m_i\psi_i^0=\sum_{i=1}^k m_i\psi_i(t)=\sum_{i=1}^N m_i\psi_i(t)H(x-x_i(t)).
    \end{equation}
    Substituting \eqref{eq:sum_conserv} in \eqref{eq:An_Mn}, we obtain (\ref{eq: A_N(M_N)}).
    \item[\textbf{(c)}] 
    We can see by construction that the function $M_N(\cdot,t)$ is piecewise constant with discontinuities only along the curves $C_i=\{(x_i(t),t)\,:\,t\geq0\}$. 
    Furthermore, we observe that $M_N(x,t)$ takes values in the set $\{0,\theta_1,\ldots,\theta_{N-1},1\},$
    therefore $A_N(M_N (x,t))$ is well defined outside the curves $C_i$.
    Thus, we need to verify the conditions along these discontinuities with shock speed $\sigma_i(t)=v_i(t)$. 
    Let us fix a time $t$ and a point 
    $(x_i(t),t)$ on the curve $C_i$. By definition of $M_N$ in \eqref{def: M_N solution}, we obtain
    \begin{align*}
        M_N(x_i(t)-,t)&=\theta_{i_*(t)-1},\\
        M_N(x_i(t)+,t)&=M_N(x_i(t),t)=\theta_{i^*(t)},
    \end{align*}
    then,
    \begin{equation}\label{eq:cumulative-mass}
        M_N(x_i(t)+,t)-M_N(x_i(t)-,t)=\theta_{i^*(t)}-\theta_{i_*(t)-1}=\sum_{j\in J_i(t)}m_j.
    \end{equation}
    Performing the same computations for $A_N$, we get
    \begin{align} \nonumber
        A_N\circ M_N(x_i(t)+,t)&-A_N\circ M_N(x_i(t)-,t)=A_N(\theta_{i^*(t)})-A_N(\theta_{i_*(t)-1})\\
 \nonumber
        &=A_N(\theta_{i^*(t)})\pm\left(\sum_{j=i_*(t)}^{i^*(t)-1}A_N(\theta_j)\right)-A_N(\theta_{i_*(t)-1})\\ \nonumber
        &=\sum_{j\in J_i(t)}m_j\psi_j^0\\&=\psi_i(t) \sum_{j\in J_i(t)}m_j,\label{eq:01}
    \end{align}
where the last two equalities are obtained using the relations (\ref{eq: psi_i^0}) and by iterating \eqref{eq: conservation of rho-psi} across interactions. 
Thanks to \cite[Th. 4.19 (Young)]{Cannarsa-D'Aprile}, the function $\phi\ast M_N\in L^\infty(\R)\cap C^0(\R)$ and {\blu can be expressed as follows:}
\begin{align*}
\phi\ast M_N(x,t) & =  \sum_{j=1}^N  \int_\R \phi(x-y) m_j H(y-x_j(t)) \, dy\\
&=   \sum_{j=1}^N  m_j \int_{x_j(t)}^{+\infty} \phi(x-y)  \, dy
=   \sum_{j=1}^N  m_j \int_{-\infty}^{x-x_j(t)} \phi(\xi)  \, d\xi
\\
&=   \sum_{j=1}^N  m_j \,\omega( x - x_j(t)).
\end{align*}
In particular, the pointwise value of $\phi\ast M_N$ is well defined as it is
\begin{equation}\label{eq:M_N-xi}
    \phi\ast M_N(x_i(t),t) = \sum_{j=1}^Nm_j\omega(x_i(t)-x_j(t)).
\end{equation}
 Putting together \eqref{eq:cumulative-mass}, \eqref{eq:01}, and using \eqref{eq:M_N-xi}, we obtain
    \begin{align*}
         &\frac{A_N\circ M_N(x_i(t)+,t)-A_N\circ M_N(x_i(t)-,t)}{M_N(x_i(t)+,t)-M_N(x_i(t)-,t)}=\frac{\sum_{j\in J_i(t)}m_j\psi_j^0}{\sum_{j\in J_i(t)}m_j}\\
         &=\psi_i(t)
         =v_i(t)+\sum_{j=1}^Nm_j\omega(x_i(t)-x_j(t))       
         \\&=v_i(t)+\phi\ast M_N(x_i(t),t),
    \end{align*}
    then the Rankine-Hugoniot condition 
    is proved. 
    \item[\textbf{(d)}] 
    Now, we check \eqref{eq:ineq_discontinuity}: we have 
    \begin{equation}\label{eq:oleinik_1}
        v_i(t)+\phi\ast M_N(x_i(t),t)\leq\frac{A_N(\theta)-A_N(\theta_{i_*(t)-1})}{\theta-\theta_{i_*(t)-1}},
    \end{equation}
    with $\theta\in(\theta_{i_*(t)-1},\theta_{i^*(t)})$.
    Being $A_N$ piecewise linear, it is sufficient to prove that \eqref{eq:oleinik_1} is verified at the breakpoints $\theta=\theta_k$ with $k\in\{i_*(t),\dots,i^*(t)-1\}$. Then, we can write 
    \begin{align*}
        \frac{A_N(\theta_k)-A_N(\theta_{i_*(t)-1})}{\theta_k-\theta_{i_*(t)-1}}&
        =\frac{\sum_{j=i_*(t)}^k m_j\psi_i^0}{\sum_{j=i_*(t)-1}^k m_j}\geq\frac{\sum_{j\in J_i(t)}m_j\psi_i^0}{\sum_{j\in J_i(t)} m_j}
        \\&=\psi_i(t)=v_i(t)+\phi\ast M_N(x_i(t),t),
    \end{align*}
    where the inequality is due to Lemma \ref{lemma: barycentric}.
    \end{itemize}
\end{proof}
\subsection{\color{black}The solution of the scalar balance law}
The next step is to study the solution of the scalar balance law \eqref{scalar balance law} using the result obtained 
in Theorem \ref{th:solution_MN}. 
The idea is to pass to the limit as $N$ goes to $+\infty$. 
Let us consider the 
Cauchy problem
\begin{equation}\label{def:Cauchy_problem}
\begin{cases}
    \partial_tM+\partial_x(A(M))=(\phi\ast M)\partial_x M,\qquad \\M(x,0)=M^0(x), 
    \end{cases}
\end{equation}
for some given initial datum $M^0$. We have the following result.
\begin{theorem}\label{theorem:entropy-solution}
    Assume \textbf{(H)} and \eqref{phi-def}. Consider the Cauchy problem \eqref{def:Cauchy_problem} with $M^0$ a nondecreasing function, and
    suppose that there exists an $R^0>0$ such that
    $$
    M^0(-x)=0,\qquad M^0(x)=1,\qquad\text{if $x\geq R^0$}.
    $$
    Let the flux $A:[0,1]\to\R$ be a Lipschitz continuous function. Then we have the following results.
    \begin{itemize}
        \item[\textbf{(a)}] Given any $T>0$, the Cauchy problem \eqref{def:Cauchy_problem} has a unique entropy weak solution
        $$
        M\in BV(\R\times[0,T]).
        $$
        \item[\textbf{(b)}] For any $T>0$, the entropy weak solution $M$ can be approximated by the discretized balance law \eqref{eq:discretized scalar balance law} 
        with initial datum \eqref{eq:discretized scalar balance law-init data}, and thus by the sticky particle Cucker-Smale dynamics \eqref{eq:C-S}, 
        i.e. there exists a sequence of initial data $M_N^0$ and fluxes $A_N$ satisfying the assumptions \eqref{def: M_N^0}-\eqref{def: A_N} respectively, 
        such that the associated entropy weak solution $M_N$ constructed in Theorem \ref{th:solution_MN} satisfies
        \begin{equation}\label{eq:M_N-to-M}
            M_N-M\to0\quad\text{in}\quad C([0,T];L^1(\R)),
        \end{equation}
        and
        \begin{equation}\label{eq: dev-M_N-to-M}
            \partial_t M_N(\cdot,t)\overset{\ast}{\rightharpoonup} \partial_t M(\cdot,t)\quad\text{in}\quad \mathcal{M}(\R),
        \end{equation}
        for any $t\in[0,T]$, where $\mathcal{M}(\R)$ is the signed measure space.
        \item[\textbf{(c)}] Let $\tilde M$ be the entropy weak solution of 
        $$\begin{cases}
        \partial_t\tilde M+\partial_x(\tilde A(\tilde M))=(\phi\ast \tilde M)\partial_x\tilde M\quad\\\tilde M(x,0)=\tilde M^0,
        \end{cases}
        $$
        where the initial datum $\tilde M^0$ and the flux $\tilde A$ satisfy the same assumption as $M^0$ and $A$ respectively. 
        Then, for any $t\geq0$, the following stability bounds are satisfied
        \begin{equation}\label{eq:stability-estimate}
            \|M(\cdot,t)-\tilde M(\cdot,t)\|_{L^1(\R)}\leq e^{2t\|\phi\|_{L^\infty(\R)}}\|M^0-\tilde M^0\|_{L^1(\R)}+\frac{|A-\tilde A|_{Lip}}{2\|\phi\|_{L^\infty(\R)}}(e^{t\|\phi\|_{L^\infty(\R)}}-1),
        \end{equation}
        and
        \begin{equation}\label{eq:stability-estimate-two}
            \|M(\cdot,t)-\tilde M(\cdot,t)\|_{L^1(\R)}\leq \|M^0-\tilde M^0\|_{L^1(\R)}+(|A-\tilde A|_{Lip}+4\|\omega\|_{L^\infty(\R)})t.
        \end{equation}
    \end{itemize}
\end{theorem}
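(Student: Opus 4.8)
\emph{Overall strategy.} The plan is to realize $M$ as the limit of the sticky–particle approximations $M_N$ constructed in Theorem~\ref{th:solution_MN}, then to recover uniqueness and the two stability bounds by a Kru\v{z}kov-type argument in which the nonlocal coupling is treated perturbatively. Part (b) is simply the approximating family used to prove existence in (a), and part (c) provides the uniqueness that pins down the limit. \textbf{Step 1 (construction and uniform estimates).} Fix $T>0$. Discretize the datum by choosing $\theta_i=i/N$, $m_i=1/N$, and $x_i^0$ equal to a quantile of $M^0$, so that $M_N^0=\sum_j m_jH(\cdot-x_j^0)$ satisfies \eqref{def: M_N^0}, $\{x_i^0\}\subset[-R^0,R^0]$ and $\norm{M_N^0-M^0}_{L^1(\R)}\to0$; let $A_N$ be the continuous piecewise-linear interpolant of $A$ at the $\theta_i$, so that \eqref{def: A_N} holds, $A_N\to A$ uniformly and $|A_N|_{\Lip}\le|A|_{\Lip}$. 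Theorem~\ref{th:solution_MN} then yields an entropy solution $M_N$ of \eqref{eq:discretized scalar balance law}--\eqref{eq:discretized scalar balance law-init data}. Since $\psi_i^0=m_i^{-1}(A_N(\theta_i)-A_N(\theta_{i-1}))\in[-|A|_{\Lip},|A|_{\Lip}]$, Proposition~\ref{prop2.2} bounds all velocities by a constant $\tilde M$ independent of $N$; hence (i) $M_N(\cdot,t)$ is nondecreasing from $0$ to $1$, so its total variation in $x$ equals $1$; (ii) $\{x_i(t)\}\subset[-R(T),R(T)]$ with $R(T)=R^0+T\tilde M$ uniform, and $\norm{M_N(\cdot,t)-M_N(\cdot,s)}_{L^1(\R)}=\sum_i m_i|x_i(t)-x_i(s)|\le\tilde M|t-s|$; (iii) $\partial_tM_N(\cdot,t)$ is bounded in $\MM(\R)$ uniformly in $N$ and $t$.

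\textbf{Step 2 (compactness and passage to the limit).} From (i)--(iii), a Helly/Arzel\`a--Ascoli argument (uniform spatial $BV$, uniform $L^1$-Lipschitz dependence on $t$, uniform compact support of the gradients) gives a subsequence with $M_N\to M$ in $C([0,T];L^1(\R))$ for every $T$, and $M\in BV(\R\times[0,T])$. Then $\phi\ast M_N\to\phi\ast M$ uniformly, since $\norm{\phi\ast(M_N-M)}_{L^\infty}\le\norm{\phi}_{L^\infty}\norm{M_N-M}_{L^1}$ and both are continuous by \cite{Cannarsa-D'Aprile}; for a Kru\v{z}kov pair \eqref{def:Kruzkov} one has $|M_N-\alpha|\to|M-\alpha|$ and $q_N(M_N)\to q(M)$ in $L^1$, and $\partial_x|M_N-\alpha|\weakstar\partial_x|M-\alpha|$ in $\MM(\R)$ by the uniform $BV$ bound. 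In the entropy inequality \eqref{eq:entropy_inequality} the nonlocal term pairs the uniformly convergent continuous function $\varphi\,(\phi\ast M_N)$ with the weakly-$*$ convergent measures $\partial_x\eta(M_N)$, so the whole inequality passes to the limit; monotonicity and the support condition of Definition~\ref{def:entropy_sol_M} also survive. Hence $M$ is an entropy solution, which gives existence in (a). Together with uniqueness (Step~3) every subsequential limit equals $M$, so the full sequence converges: this is (b), and \eqref{eq: dev-M_N-to-M} follows from $M_N\to M$ in $\DD'$ plus the uniform bound (iii).

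\textbf{Step 3 (reformulation, uniqueness and stability).} Integrating by parts, $\int\varphi(\phi\ast M)\,d\partial_x\eta(M)=-\int\eta(M)\big(\partial_x\varphi\,(\phi\ast M)+\varphi\,\partial_x(\phi\ast M)\big)dx$, so \eqref{eq:entropy_inequality} states that $M$ is a Kru\v{z}kov entropy solution of a conservation law with $(x,t)$-dependent flux $F(M,x,t)=A(M)-M\,(\phi\ast M)(x,t)$ (characteristic speed $A'(M)-\phi\ast M$) and bounded source $-\eta(M)\,\partial_x(\phi\ast M)$, where $\norm{\partial_x(\phi\ast M)}_{L^\infty}=\norm{\phi\ast\partial_xM}_{L^\infty}\le\norm{\phi}_{L^\infty}$ because $\partial_xM$ has mass $1$. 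Given two entropy solutions $M,\tilde M$, the doubling-of-variables technique applied in this form sees the solution-dependent coefficients only through $\norm{\phi\ast M-\phi\ast\tilde M}_{L^\infty}\le\norm{\phi}_{L^\infty}\norm{M-\tilde M}_{L^1}$, the uniform bounds $\norm{\partial_x(\phi\ast M)}_{L^\infty},\norm{\partial_x(\phi\ast\tilde M)}_{L^\infty}\le\norm{\phi}_{L^\infty}$, and $|A-\tilde A|_{\Lip}$ from the difference of the local fluxes; this yields a differential inequality of the form $\frac{d}{dt}\norm{M(\cdot,t)-\tilde M(\cdot,t)}_{L^1}\le 2\norm{\phi}_{L^\infty}\norm{M(\cdot,t)-\tilde M(\cdot,t)}_{L^1}+(\text{term in }|A-\tilde A|_{\Lip})$, and Gr\"onwall gives \eqref{eq:stability-estimate} (taking $A=\tilde A$, $M^0=\tilde M^0$ gives uniqueness, closing (a) and (b)). For \eqref{eq:stability-estimate-two} one argues with the monotone-rearrangement picture: write $\norm{M(\cdot,t)-\tilde M(\cdot,t)}_{L^1(\R)}$ as the $1$-Wasserstein distance between the densities, i.e. as $\norm{X(\cdot,t)-\tilde X(\cdot,t)}_{L^1(0,1)}$ for the generalized inverses $X,\tilde X$, and differentiate; the integrand is bounded by the difference of characteristic speeds $|A'(M)-\tilde A'(\tilde M)|+|(\phi\ast M)(X)-(\phi\ast\tilde M)(\tilde X)|$, the first term by $|A-\tilde A|_{\Lip}$ and the second by $4\norm{\omega}_{L^\infty}$, using $0\le\omega\ast\rho=\phi\ast M\le\norm{\omega}_{L^\infty}$ (valid for both solutions since $\omega\ge0$, $\int\rho=1$). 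Integrating in time yields \eqref{eq:stability-estimate-two}. Both estimates are first established on the sticky–particle approximations, where every step is elementary via \eqref{eq: conservation of rho-psi} and Lemma~\ref{lemma: barycentric}, and then passed to the limit using (b).

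\textbf{Main obstacle.} The delicate part is Step~3: making the doubling of variables rigorous for merely $BV$ solutions when the flux carries the self-consistent and only continuous coefficient $\phi\ast M$ and the entropy dissipation is a genuine Radon measure, so that the nonlocality contributes only the linear Gr\"onwall term $2\norm{\phi}_{L^\infty}\norm{M-\tilde M}_{L^1}$ (respectively the $4\norm{\omega}_{L^\infty}$ defect) and nothing worse; carrying the comparison out first on the sticky–particle level, where the uniform speed bound of Proposition~\ref{prop2.2} and the barycentric structure are available, and then passing to the limit is what makes the argument close.
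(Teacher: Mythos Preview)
Your Steps~1 and~2 (construction of the discretizations, uniform bounds, Helly/Arzel\`a--Ascoli compactness, and passage to the limit in the Kru\v{z}kov entropy inequalities) are essentially the same strategy as the paper's proof, and the reformulation you propose at the start of Step~3 (viewing the equation as a conservation law with the $(x,t)$--dependent coefficient $\phi\ast M$ and running the doubling of variables to obtain the Gr\"onwall inequality behind \eqref{eq:stability-estimate}) is also in the same spirit. Two differences are worth flagging. First, the paper performs the doubling of variables \emph{directly} on entropy solutions $M,\tilde M$, not on the sticky--particle approximants; your ``do it on $M_N$ and pass to the limit via (b)'' introduces a mild circularity (you need uniqueness to upgrade subsequential to full convergence, and uniqueness comes from the very estimate you are passing to the limit). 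This is repairable by working with subsequential limits, but the paper's route is cleaner.

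The genuine gap is your argument for \eqref{eq:stability-estimate-two}. You propose to write $\|M-\tilde M\|_{L^1}=\|X-\tilde X\|_{L^1(0,1)}$ and claim that $\partial_t X(m,t)$ equals the characteristic speed $A'(m)-(\phi\ast M)(X(m,t),t)$. This is false for entropy solutions: at a shock (equivalently, for sticky particles after a collision) the mass levels $m$ lying in the shock all move with the \emph{Rankine--Hugoniot} speed $v_i=\psi_i-\sum_j m_j\omega(x_i-x_j)$, where $\psi_i$ is the \emph{average} $\bigl(\sum_{j\in J_i}m_j\bigr)^{-1}\sum_{j\in J_i}m_j\psi_j^0$, not $A'(m)=\psi^0(m)$. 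In particular, your splitting ``first term bounded by $|A-\tilde A|_{\Lip}$'' fails: take $A=\tilde A$ (so $\psi_j^0=\tilde\psi_j^0$) and two configurations with different collision histories; then $\sum_i m_i|\psi_i(t)-\tilde\psi_i(t)|$ can be of order $|\psi_1^0-\psi_2^0|$, not $0$. The paper avoids this entirely: it obtains \emph{both} stability bounds from the \emph{same} Kru\v{z}kov doubling inequality \eqref{eq:final-stability}. For \eqref{eq:stability-estimate} the nonlocal terms are estimated via $\|\phi\ast(M-\tilde M)\|_{L^\infty}\le\|\phi\|_{L^\infty}\|M-\tilde M\|_{L^1}$ and $\|\phi\ast\partial_x(M+\tilde M)\|_{L^\infty}\le 2\|\phi\|_{L^\infty}$, which feeds Gr\"onwall. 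For \eqref{eq:stability-estimate-two} one instead uses $\phi\ast M=\omega\ast\rho$ together with $\omega\ge0$, $\int\rho=1$, so that $|\phi\ast(M+\tilde M)|\le 2\|\omega\|_{L^\infty}$, $|\phi\ast(M-\tilde M)|\le 2\|\omega\|_{L^\infty}$, and $\int_\R\bigl|\partial_x|M-\tilde M|\bigr|\,dx\le 2$, $\int_\R\partial_x(M+\tilde M)\,dx=2$; this gives the $4\|\omega\|_{L^\infty}t$ term directly, with no Gr\"onwall and no need to track particle speeds. Replacing your rearrangement argument by this second way of bounding the nonlocal terms in \eqref{eq:final-stability} closes the proof.
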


    \begin{proof}
        Let $N\in\N$ be fixed, we construct two functions $M_N$ and $A_N$ 
        satisfying the assumptions of Theorem \ref{th:solution_MN}. 
        Let us consider positive masses $m_{i,N}$ such that 
        \begin{equation}\label{def:hypotheses_m_iN}
            \sum_{i=1}^N m_{i,N}=1,\qquad \lim_{N\to\infty}\max_{1\leq i\leq N} m_{i,N}=0,
        \end{equation}
         and define the quantities $\theta_i$ as
        $$
        \theta_0:=0,\qquad \theta_i=\sum_{j=1}^i m_{i,N},\qquad \text{for $i=1,\dots,N$}.
        $$
        We define the quantities $x_{i,N}^0=x_{i,N}(0)$
        $$
        x_{i,N}^0:=\inf\{x: M^0(x)\geq\theta_i\},\qquad i=1,\dots,N.
        $$
       Notice that $x_{i,N}^0 \in [-R^0,R^0],\, \forall i\in\{1,...,N\}$. 
       Now, we can construct the functions $M_N^0$ and $A_N$ as in the following:
        $$
        M_N^0(x)=\sum_{j=1}^N m_{i,N}H(x-x_{i,N}^0),
        $$
        while $A_N$ is the piecewise linear approximation of $A$ such that
        \begin{equation}\label{def:A_N-theta}
            A_N(\theta_{i,N})=A(\theta_{i,N}),\qquad i=0,\dots,N.
        \end{equation}
        These functions satisfy the hypotheses \eqref{def: M_N^0} and \eqref{def: A_N} of Theorem \ref{th:solution_MN}, respectively, then
        $$
        M_N(x,t)=\sum_{i=1}^N m_{i,N}H(x-x_{i,N}(t))
        $$
        is an entropy weak solution to \eqref{eq:discretized scalar balance law}-\eqref{eq:discretized scalar balance law-init data}. We denote by $x_{i,N}(t)$, $i=1,\dots,N$ the trajectories of the particles involved in the Cucker-Smale dynamics 
        with initial data $(m_{i,N},x_{i,N}^0,v_{i,N}^0)_{i=1}^N$. 
        The initial velocities can be obtained using the relation 
        $$
        v_{i,N}^0=\psi_{i,N}^0+\sum_{j=1}^N m_{j,N}\omega(x-x_{i,N}^0),
        $$
        with $\psi_{i,N}^0$ such that 
        $$
        m_{i,N}\psi_{i,N}^0=A_N(\theta_{i,N})-A_N(\theta_{i-1,N}).
        $$
        Moreover, the quantities $M_N^0$ and $A_N$ approximate the quantities $M^0$ and $A,$ i.e., the following inequalities hold true
        \begin{align}
            \|M_N^0-M^0\|_{L^1(\R)}&\leq 2R^0\max_{1\leq i\leq N}m_{i,N},
            \label{eq:convergence-M^0}
            \\ \sup_{m\in[0,1]}|A_N(m)-A(m)|&\leq |A|_{Lip}\max_{1\leq i\leq N}m_{i,N},
            \label{eq:convergence-A}
        \end{align}
        and these imply that $M_N^0-M^0\to 0$ in $L^1(\R)$ and $A_N\to A$ uniformly as $N\to\infty$ using \eqref{def:hypotheses_m_iN}.\newline
        Let us prove \eqref{eq:convergence-M^0} and \eqref{eq:convergence-A}.
        \begin{itemize}
            \item To prove [\eqref{eq:convergence-M^0}], we denote $x_{0,N}^0:=-R^0$ and we write 
            \begin{align*}
                \|M_N^0&-M^0\|_{L^1(\R)}= \sum_{j=1}^N\int_{x_{j-1,N}}^{x_{j,N}}|M_N^0(x)-M^0(x)|\,dx\\
                &= \sum_{j=1}^N\int_{x_{j-1,N}}^{x_{j,N}}M^0(x)-M_N^0(x)\,dx \leq \sum_{j=1}^N m_{j,N}(x_{j,N}^0-x_{j-1,N}^0)\leq 2R^0\max_{1\leq j\leq N}m_{j,N}.
            \end{align*}
            \item To prove [\eqref{eq:convergence-A}], let fix $m$ in the set $[0,1]$. Then, there exists  an index $i$ such that $m\in[\theta_{i-1,N},\theta_{i,N})$. 
            Using \eqref{def: A_N} and \eqref{def:A_N-theta}, we obtain
            \begin{align*}
                &A_N(m)-A(m)\\
                &\qquad =\frac{\theta_{i,N}-m}{m_{i,N}}(A(\theta_{i-1,N})-A(m))+\frac{m-\theta_{i-1,N}}{m_{i,N}}(A(\theta_{i,N})-A(m))\\
                &\qquad \leq \frac{\theta_{i,N}-\theta_{i-1,N}}{m_{i,N}}|A(\theta_{i-1,N})-A(m)|+\frac{\theta_{i,N}-\theta_{i-1,N}}{m_{i,N}}|A(\theta_{i,N})-A(m)|\\
                &\qquad =|A(\theta_{i-1,N})-A(m)|+|A(\theta_{i,N}-A(m)|\leq |A|_{Lip}(|\theta_{i-1,N}-m|+|\theta_{i,N}-m|)\\
                &\qquad =|A|_{\Lip}(\theta_{i,N}-\theta_{i-1,N})=|A|_{\Lip}\,m_{i,N}\leq |A|_{\Lip}\max_{1\leq j\leq N} m_{j,N}.
            \end{align*}
        \end{itemize}
        Let $T>0$ be a fixed time, since $M_N(\cdot,t)$ is uniformly bounded and nondecreasing for any $t\in[0,T],$ we can apply the Helly's Theorem 
        stating that there exists a $L^1_{loc}(\R)-$convergent subsequence $M_{N_k}(\cdot, t)$. Applying a diagonal argument, it is possible to get 
        another subsequence convergent in $L^1_{loc}(\R)$ for all rational $t\in[0,T]$, we denote again this subsequence as $M_{N_k}$ to simplify the notation 
        and we denote with $M(\cdot,t)$ the limit of this subsequence. The aim is to extend this convergence from $L^1_{loc}(\R)$ to $L^1(\R)$ 
        and also to extend the result to irrational times. From 
        \eqref{eq:C-S omega}, 
        we know $\{x_{i,N}(t)\}_{i=1}^N\subset[-R(T),R(T)]$ with $R(T)=R^0+T\tilde M$, thus
        \begin{align*}
            M_N(x,t)&=0,\qquad x\leq -R(T),\\
            M_N(x,t)&=1,\qquad x\geq R(T),
        \end{align*}
        and so $M_{N_k}(t)-M(t)\to0$ in $L^1(\R)$ for all rational times $t\in[0,T]$.\newline
        The extension of this result of convergence to irrational times is a consequence of the following time regualar estimate
        \begin{align}
            \label{eq:regular-time-estimate}
            \int_\R |M_N(x,t)-M_N(x,s)|\,dx&\leq\sum_{i=1}^N m_{i,N}|x_{i,N}(t)-x_{i,N}(s)|\leq \max_{1\leq i\leq N}|v_{i,N}^0|(t-s)\\
            \nonumber
            &\leq \tilde M(t-s).
        \end{align}
        Combining \eqref{eq:regular-time-estimate} with the established convergence at rational times, we get
        \begin{equation}\label{eq:L1_convergence}
        M_{N_k}-M\to0\quad\text{in}\quad C([0,T];L^1(\R)).
        \end{equation}
        Moreover, by \eqref{eq:regular-time-estimate}, it holds
        $$
        \|\partial_t M_{N_k}(x,t)\|_{\mathcal{M}}\leq \tilde M,\qquad\forall t\in[0,T],
        $$
        then, we can extract a further subsequence, still denoted by $M_{N_k}$ for simplicity, such that
        $$
        \partial_t M_{N_k}(\cdot,t)\weakstar\partial_tM(\cdot,t)\quad\text{in}\quad \mathcal{M}(\R),
        $$
        and so we can conclude that $M\in BV(\R\times[0,T])$. 
        Now, we have to verify if the limit function $M$ satisfies the entropy condition \eqref{entropy_cond1}. 
        We check this using the Kru\v{z}kov entropy pairs $(\eta,q)$ 
        \textcolor{black}{
        \begin{equation}\label{def:Kruzkov}
          \eta(M)=|M-\alpha|,\quad  q(M)=\text{sgn}(M-\alpha)(A(M)-A(\alpha)).
        \end{equation}
        }
        We know that
        $$
        M_N(x,t)=\sum_{i=1}^N m_{i,N}H(x-x_{i,N}(t))
        $$
        is an entropy weak solution of the discretized scalar balance law \eqref{eq:discretized scalar balance law}; 
        this means that it satisfies the entropy inequality with the entropy/entropy-flux pair $(\eta,q_N)$, 
        where $\eta(m)=|m-\alpha|$, $q_N(m)=\sgn(m-\alpha)(A_N(m)-A_N(\alpha))$ and $\alpha\in[0,1]$
        \begin{equation}\label{eq:entropy_discretized}
            \int_0^T\int_\R [\eta(M_N)\partial_t\varphi+q_N(M_N)\partial_x\varphi+(\phi\ast M_N)\varphi\partial_x(\eta(M_N))]\,dx\,dt\geq0.
        \end{equation}
        Now, we consider the subsequence $M_{N_k}$ and we pass to the limit, to simplify the notation we use $M_N$ in place of 
        $M_{N_k}$. For a fixed $t\in[0,T]$ we obtain
        \begin{equation}\label{eq:M_N_M}
            \begin{aligned}
        \|\eta(M_N)-\eta(M)\|_{L^1(\R)}&=\int_\R |\eta(M_N)-\eta(M)|\,dx\leq |\eta|_{Lip}\int_\R |M_N-M|\,dx\\
        &=|\eta|_{Lip}\|M_N-M\|_{L^1(\R)}\to0,
        \end{aligned}
        \end{equation}
        thanks to the convergence result in \eqref{eq:L1_convergence}. Moreover,
        \begin{equation}\label{eq:q_N_q}
            \begin{aligned}
            \|q_N(M_N)&-q(M)\|_{L^1(\R)}  \leq \|q_N(M_N)-q(M_N)\|_{L^1(\R)}+\|q(M_N)-q(M)\|_{L^1(\R)}\\
            &\leq 2R(T)\|A_N-A\|_{L^\infty([0,1])}+|A|_{Lip}\|M_N-M\|_{L^1(\R)}\to0.
        \end{aligned}
        \end{equation}
         Thanks to \eqref{eq:M_N_M} and \eqref{eq:q_N_q}, the first two terms in \eqref{eq:entropy_discretized} converge to
\begin{equation*}
            \int_0^T\int_\R [\eta(M)\partial_t\varphi+q(M)\partial_x\varphi]\,dx\,.
\end{equation*}
About the last term, we start by showing that  $\phi\ast M_N$ converges to $\phi\ast M$ as $N\to +\infty$, 
indeed, since $\phi\in L^\infty(\R)$, we have
        $$
        \|\phi\ast M_N-\phi\ast M\|_{L^\infty(\R)} \leq \|\phi\|_{L^\infty(\R)}\|M_N-M\|_{L^1(\R)}\to 0.
        $$
       Let us consider the term $\partial_x\eta(M_N)$, thanks to \cite[Lemma 5.2]{Leslie-Tan} we know that
        $$
        \|\partial_x\eta(M_N)\|_{\mathcal{M}(\R)} \leq |\eta|_{Lip}
        $$
        and this is also true for $\partial_x(\eta(M))$. Now, we are able to prove the convergence of the last term 
        in 
        \eqref{eq:entropy_discretized}
        \begin{align*}
            \left|\int_0^T\int_\R(\phi\ast M_N)\right.&\left.\varphi\partial_x(\eta(M_N))\,dx\,dt-\int_0^T\int_\R (\phi\ast M)\varphi\partial_x(\eta(M))\,dx\,dt\right|\\
            \leq & \int_0^T \|\varphi(t)\|_{L^\infty(\R)}\|\phi\ast M_N-\phi\ast M\|_{L^\infty(\R)}\|\partial_x(\eta(M_N)\|_\MM\,dt \\
            &+\left| \int_0^T\int_\R (\phi\ast M)\varphi[\partial_x(\eta(M_N))-\partial_x(\eta(M))]\,dx\,dt\right|,
        \end{align*}
and since $\|\phi\ast M_N-\phi\ast M\|_{L^\infty(\R)}\to0$, the first term on the right-hand side above goes to zero.  
On the second term we have
        \begin{align*}
            &\left| \int_0^T\int_\R (\phi\ast M)\varphi[\partial_x(\eta(M_N))-\partial_x(\eta(M))]\,dx\,dt \right|\\
            &\qquad\qquad  = \left| \int_0^T\int_\R -\partial_x[(\phi\ast M)\varphi][(\eta(M_N))-(\eta(M))]\,dx\,dt \right|\\ 
            &\qquad\qquad  \leq \int_0^T\int_\R\abs{-\partial_x[(\phi\ast M)\varphi][(\eta(M_N))-(\eta(M))]}\,dx\,dt\\
            &\qquad\qquad  \leq \|\partial_x[(\phi\ast M)\varphi]\|_{L^\infty(\R)}\int_0^T\|\eta(M_N)-\eta(M)\|
            _{L^1(\R)}\,dt\,.
        \end{align*}
        Since $\eta$ is Lipschitz continuous and by means of \eqref{eq:L1_convergence}, the right-hand side above converges to zero and thus, 
        $M$ is an entropy weak solution of \eqref{def:Cauchy_problem}.
        
        The uniqueness of solutions is a direct consequence of the stability estimate \eqref{eq:stability-estimate} setting $\tilde M^0=M^0$ and $\tilde A=A$. 
        The argument of the proof of \eqref{eq:stability-estimate} is based on the Kru\v{z}kov's doubling of the variables strategy, see \cite{Kruzkov1970}. 
        For fixed $(y,s)$, consider the Kru\v{z}kov entropy pair \eqref{def:Kruzkov} with $\alpha=\tilde M(y,s)$ and a test function $\varphi(x,t)=w(x,t,y,s)$. 
        In the following the integration bounds are omitted to simplify the notation. The spatial variables $x$ and $y$ are integrated over $\R$ 
        and the time variables $t$ and $s$ over the set $[0,T]$.
        Thus, the entropy inequality \eqref{eq:entropy_inequality} becomes 
        \begin{align*}
            \iint &|M(x,t)-\tilde M(y,s)|\partial_t w(x,t,y,s)\,dx\,dt\\
            &+\iint \sgn(M(x,t)-\tilde M(y,s))(A(M(x,t))-A(\tilde M(y,s)))\partial_x w(x,t,y,s)\,dx\,dt\\
            &+\iint (\phi\ast M)(x,t)(\partial_x|M(x,t)-\tilde M(y,s)|)\,dx\,dt.
        \end{align*}
         We perform the same for the entropy inequality of $\tilde M$, but with $\tilde A$ in place of $A$ and with $\alpha=\tilde M(y,s)$. 
         We integrate both the inequalities above over the remaining free variables in both cases and adding them, we get
         \begin{equation} \label{eq:doubling-Kruzkov}
             \begin{aligned}
            0 \leq &\iiiint|M(x,t)-\tilde M(y,s)|(\partial_t w + \partial_s w)(x,t,y,s)\,dx\,dt\,dy\,ds\\
            & + \iiiint \sgn(M(x,t)-\tilde M(y,s))[(A(M(x,t))-A(\tilde M(y,s)))\partial_x w(x,t,y,s)\\
            & \hspace{10em}+ (\tilde A(M(x,t))-\tilde A(\tilde M(y,s)))\partial_y w(x,t,y,s)]\,dx\,dt\,dy\,ds\\
            & + \iiiint w(x,t,y,s)[(\phi\ast M)(x,t)\partial_x|M(x,t)-\tilde M(y,s)|\\
            & \hspace{10em}+ (\phi\ast \tilde M)(y,s)\partial_y|M(x,t)-\tilde M(y,s)|]\,dx\,dt\,dy\,ds.
        \end{aligned}
        \end{equation}
        We introduce the auxiliary variables
        $$
        \bar x= \frac{x+y}{2},\quad \bar y= \frac{x-y}{2},\quad \bar t= \frac{s+t}{2},\quad \bar s= \frac{t-s}{2},
        $$
        and we choose test function
        $$
        w(x,t,y,s)=b_\eps\left( \frac{x-y}{2}\right)b_\eps\left( \frac{t-s}{2}\right)g\left( \frac{x+y}{2} \right)h_\delta\left( \frac{t+s}{2} \right)
        =b_\eps(\bar y)b_\eps( \bar s)g(\bar x)h_\delta(\bar t).
        $$
        These test functions satisfy the following properties.
        \begin{itemize}
        \item $w(x,t,y,s)$ are smooth and nonnegative.
            \item The functions $(b_\eps)_{\eps>0}$ approximate the Dirac delta distribution as $\eps\to0+$, 
            we consider $b_\eps$ as a standard mollifier supported in $(-\eps,\eps)$ and such that the integral over $\R$ is equal to 1.
            \item The function $g$ is identically 1 on $[-R(T),R(T)]$ and it is compactly supported.
            \item The functions $(h_\delta)_{\delta>0}$ approximate the indicator function of $[s,t]$ as $\delta\to0+$. 
            We consider $h_\delta$ to be identically 1 on $[s,t]$, identically 0 outside $[s-\delta,t+\delta]$ and linear on $[s-\delta,s]$ and $[t,t+\delta]$.
        \end{itemize}
        To proceed, we substitute the test function in \eqref{eq:doubling-Kruzkov} using also the auxiliary variables and notice that
        $$
        \partial_t+\partial_s=\partial_{\bar t},\quad \partial_x+\partial_y=\partial_{\bar x},\quad \partial_x-\partial_y=\partial_{\bar y}.
        $$
        We define
        $$
        A_\pm(m):=\frac{A(m)\pm \tilde A(m)}{2},
        $$
        and using the new notation, we rewrite the part into the brackets of the second term in the r.h.s. of \eqref{eq:doubling-Kruzkov} as
        $$
        (A_+(M)-A_+(\tilde M))\partial_{\bar x}w+(A_-(M)-A_-(\tilde M))\partial_{\bar y}w.
        $$
        The part into the brackets in the last term of \eqref{eq:doubling-Kruzkov} can be rewritten as
        $$
        \frac{\phi\ast M+\phi\ast\tilde M}{2}\partial_{\bar x}|M-\tilde M|+\frac{\phi\ast M-\phi\ast\tilde M}{2}\partial_{\bar y}|M-\tilde M|.
        $$
        The result of this substitutions is
        \begin{equation}
             \begin{aligned}
            0 \leq & \iiiint \left[ |M-\tilde M|\partial_{\bar t} w+\sgn(M-\tilde M)(A_+(M)-A_+(\tilde M))\partial_{\bar x}w\right]\,d\bar x\,d\bar t\,d\bar y\,d\bar s\\
            &\hspace{4em}+  \iiiint \sgn(M-\tilde M)(A_-(M)-A_-(\tilde M))\partial_{\bar y}\,d\bar x\,d\bar t\,d\bar y\,d\bar s\\
            &\hspace{4em}+  \frac{1}{2}\iiiint (\phi\ast M + \phi\ast \tilde M)w\partial_{\bar x}|M-\tilde M|\,d\bar x\,d\bar t\,d\bar y\,d\bar s\\
            &\hspace{4em}+  \frac{1}{2}\iiiint (\phi\ast M - \phi\ast \tilde M)w\partial_{\bar y}|M-\tilde M|\,d\bar x\,d\bar t\,d\bar y\,d\bar s.
        \end{aligned}
         \end{equation}
        We have to pass to the limit $\eps\to0$, for this reason we need to treat the $\bar y$ derivatives. Thanks to \cite[Lemma 5.4]{Leslie-Tan} we have that
        $$
        \gamma(M,\tilde M):=\sgn(M-\tilde M)(A_-(M)-A_-(\tilde M))
        $$
        is Lipschitz in both variables and both the Lipschitz constants are less or equal than $|A_-|_{\Lip}$.\newline
        Using \cite[Lemma 5.2 \& Lemma 5.4]{Leslie-Tan}, we obtain
        \begin{equation}\label{eq:gamma}
            \left|\frac{d}{d\bar y}\gamma(M,\tilde M)\right|\leq|A_-|_{\Lip}\left|\partial_1 M(\bar x+\bar y,\bar t+\bar s)\right|
            +|A_-|_{\Lip}\left|\partial_1 M(\bar x-\bar y,\bar t-\bar s)\right|,
        \end{equation}
        where $\partial_1$ denotes differentiation with respect to the spatial variable.
        Furthermore, let us notice that the following estimate also holds true
        \begin{equation}\label{eq:estimate_dy}
             \left| \frac{d}{d\bar y}\left|M(\bar x+\bar y,\bar t+\bar s)-\tilde M(\bar x-\bar y,\bar t-\bar s)\right|\right| 
             \leq \left| \partial_1 M(\bar x+\bar y,\bar t+\bar s)+\partial_1\tilde M(\bar x-\bar y,\bar t-\bar s)\right|.
        \end{equation}
        Using \eqref{eq:gamma}-\eqref{eq:estimate_dy} and letting $\eps\to0$, we end up with
        \begin{align*}
             & \iint \left[ |M( x,t )-\tilde M( x,t)|g(x)h'_\delta(t)+\sgn(M-\tilde M)(A_+(M)-A_+(\tilde M))g'(x)h_\delta(t) \right]\,dxdt\\
            & + \iint |A_-|_{\Lip}(|\partial_x M|+|\partial_x\tilde M|)g(x)h_\delta(t)\,dx\,dt\\
            & + \frac{1}{2}\iint \left[ \phi\ast(M+\tilde M)\partial_x|M-\tilde M| + |\phi\ast (M-\tilde M)\|\partial_x M+\partial_x\tilde M|\right]g(x)h_\delta(t)\,dx\,dt \geq 0
        \end{align*}
        where $(x,t)$ is in place of $(\bar x,\bar t)$ to simplify the notation. Thanks to the previous choice of the functions $g(x)$ and $h_\delta(t)$,
        and taking $\delta\to0$, we get
        \begin{equation}
            \begin{aligned}
             \int_\R|M(x,t)&-\tilde M(x,t)|\,dx \leq \int_\R|M(x,s)-\tilde M(x,s)|\,dx\\
            & + \frac{1}{2}|A-\tilde A|_{\Lip}\int_s^t\int_\R |\partial_x M|+|\partial_x \tilde M|\,dx\,d\tau\\
            & + \frac{1}{2}\int_s^t\int_\R \left[ \phi\ast( M+\tilde M) \cdot \partial_x|M-\tilde M|\right.\\
            &\left.+ |\phi\ast(M-\tilde M)|\cdot \partial_x(M+\tilde M)\right]\,dx\,d\tau.
        \end{aligned}\label{eq:final-stability}
        \end{equation}
        We know that $M$ and $\tilde M$ are nondecreasing, thus we can substitute the terms $|\partial_x M|$, $|\partial_x \tilde M|$ 
        and $|\partial_x M+\partial_x \tilde M|$ with $\partial_x M$, $\partial_x \tilde M$ and $\partial_x M+\partial_x\tilde M$, respectively. We write
        $$
        \frac{1}{2}|A-\tilde A|_{\Lip}\int_s^t\int_\R |\partial_x M|+|\partial_x \tilde M|\,dx\,d\tau=(t-s)|A-\tilde A|_{Lip},
        $$
        while we split the last integral of \eqref{eq:final-stability} into two terms to treat it, 
        the first one is
        \begin{align*}
            \frac{1}{2} \int_s^t\int_\R \phi\ast(M+\tilde M)\partial_x|M-\tilde M|\,dx\,d\tau &\leq \frac{1}{2} \left| \int_s^t\int_\R \phi\ast(M+\tilde M)\partial_x|M-\tilde M|\,dx\,d\tau \right|\\
            & = \frac{1}{2} \left| -\int_s^t\int_\R \phi\ast(\partial_x M+\partial_x \tilde M)|M-\tilde M|\,dx\,d\tau \right| \\
            &\leq \|\phi\|_{L^\infty(\R)}\int_s^t\|M(\cdot,\tau)-\tilde M(\cdot,\tau)\|_{L^1(\R)}\,d\tau,
        \end{align*}
        where we use 
        \begin{align*}
         \left| \phi\ast(\partial_x M+\partial_x \tilde M) \right| & = \left| \int_\R \phi(x-y)(\partial_y M(y,\tau)+\partial_y \tilde M(y,\tau))\,dy \right|\\
        & \leq \|\phi\|_{L^\infty(\R)}\int_\R \partial_y M(y,\tau)+\partial_y \tilde M(y,\tau)\,dy=2\|\phi\|_{L^\infty(\R)}.
        \end{align*}
        Using a similar argument, it is possible to prove that
        $$
        \frac{1}{2}\int_s^t\int_\R |\phi\ast(M-\tilde M)|(\partial_x M+\partial_x \tilde M)\,dx\,d\tau 
        \leq \|\phi\|_{L^\infty}\int_s^t\|M(\cdot,\tau)-\tilde M(\cdot,\tau)\|_{L^1(\R)}\,d\tau.
        $$
        If $s=0$, the inequality becomes
        \begin{equation}
             \begin{aligned}\label{eq:inequality1}
            & \|M(\cdot,t)-\tilde M(\cdot,t)\|_{L^1(\R)}\\
            & \leq \|M^0-\tilde M^0\|_{L^1(\R)}+t|A-\tilde A|_{\Lip}+2\|\phi\|_{L^\infty(\R)}\int_0^t\|M(\cdot,\tau)-\tilde M(\cdot,\tau)\|_{L^1(\R)}\,d\tau.
        \end{aligned}
         \end{equation}
        Applying the Gronwall lemma, we get \eqref{eq:stability-estimate}. The uniqueness follows taking $\tilde M^0=M^0$ and $\tilde A_\eps= A_\eps$.\newline
        Now we prove the estimate \eqref{eq:stability-estimate-two}. As before,
$$
\frac{1}{2}|A-\tilde A|_{\Lip}\int_s^t\int_\R |\partial_x M|+|\partial_x \tilde M|\,dx\,d\tau = |A-\tilde A|_{\Lip}(t-s).
$$
We split the second integral of \eqref{eq:final-stability} into two terms, the first one is
\begin{align*}
     \frac{1}{2} & \int_s^t\int_\R \phi\ast(M+\tilde M)\partial_x|M-\tilde M|\,dx\,d\tau
     =\frac{1}{2}\int_s^t\int_\R \omega\ast(\rho+\tilde\rho)\partial_x|M-\tilde M|\,dx\,d\tau \\ 
     & \leq \abs{\frac{1}{2}\int_s^t\int_\R \omega\ast(\rho+\tilde\rho)\partial_x|M-\tilde M|\,dx\,d\tau}
    \leq \frac{1}{2}\int_s^t\int_\R |\omega\ast(\rho+\tilde\rho)|\abs{|\partial_x|M-\tilde M|}\,dx\,d\tau \\
    & \leq \frac{1}{2} \int_s^t\int_\R 2\|\omega\|_{L^\infty(\R)}\abs{\partial_x|M-\tilde M|}\,dx\,d\tau 
    =\int_s^t\int_\R \|\omega\|_{L^\infty(\R)}|\partial_x(M-\tilde M)|\,dx\,d\tau\\
    & \leq \|\omega\|_{L^\infty(\R)}\int_s^t\int_\R |\partial_x M|+|\partial_x \tilde M|\,dx\,d\tau=2\|\omega\|_{L^\infty(\R)}(t-s).
\end{align*}
While, for the second one, we obtain
\begin{align*}
    \frac{1}{2}  \int_s^t\int_\R |\phi\ast(M-\tilde M)|\partial_x(M+\tilde M)\,dx\,d\tau &\leq \|\omega\|_{L^\infty(\R)} \int_s^t\int_\R \partial_x (M+\tilde M)\,dx\,d\tau\\
    & = 2\|\omega\|_{L^\infty(\R)}(t-s).
\end{align*}
Taking $s=0$, we get
\begin{equation}\label{eq:inequality2}
    \|M(\cdot,t)-\tilde M(\cdot,t)\|_{L^1(\R)} \leq \| M^0-\tilde M^0\|_{L^1(\R)}+(|A-\tilde A|_{\Lip}+4\|\omega\|_{L^\infty(\R)})t,
\end{equation}
and so \eqref{eq:stability-estimate-two} holds true. 
This completes the proof of Theorem~\ref{theorem:entropy-solution}.
\end{proof}

\begin{rem} Combining the two bounds \eqref{eq:inequality1} and \eqref{eq:inequality2}, we have
    \begin{align*}
        &\|M(\cdot,t)-\tilde M(\cdot,t)\|_{L^1(\R)}\\
        &\leq \min\Big\{ e^{2t\|\phi\|_{L^\infty(\R)}}\|M^0-\tilde M^0\|_{L^1(\R)}+\frac{|A-\tilde A|_{\Lip}}{2\|\phi\|_{L^\infty(\R)}}(e^{t\|\phi\|_{L^\infty(\R)}}-1),\\ 
        & \hspace{7em}\qquad 
        \| M^0-\tilde M^0\|_{L^1(\R)} +\left(|A-\tilde A|_{\Lip}+4\|\omega\|_{L^\infty(\R)}\right)t\Big\}\,.
    \end{align*}
    This shows that for large times, the increase of the $L^1$ distance between the solutions is up to linear, while for small times, 
    the first bound provides a better estimate.
\end{rem}

\section{Approximation of the nonlocal system}\label{Sec:4}
\setcounter{equation}{0}

{\blu In this section, we return to the system \eqref{ARZ_conservative} and analyze it using the framework developed in Section~\ref{Sec:3} for the cumulative function $M$.
Specifically, let $\PP_c(\R)$ denote the space of probability measures with compact support, equipped with the Wasserstein-1 metric, which we define below.}

\begin{definition}
    Let $\rho,\mu\in\PP_c(\R)$. The Wasserstein-1 distance between them is
    $$
    \WW_1(\rho,\mu)=\sup_{\Lip(f)\leq1}\left| \int_\R f(x)\,d\rho(x)-\int_\R f(x)\,d\mu(x) \right|.
    $$
\end{definition}
In our setting, the $\WW_1$ convergence is equivalent to the weak$-*$ convergence in the sense of measures.
We remark that, if $M$ and $\tilde M$ are the cumulative distribution functions of $\rho$ and $\mu$ respectively, then {\blu the following identity holds:}
\begin{equation}\label{rem:Wass_distance}
\WW_1(\rho,\mu)=\|M-\tilde M\|_{L^1(\R)}\,.
\end{equation}

Now, let's consider
\begin{equation}\label{eq:system-rho-u}
    \begin{cases}
        \partial_t\rho+\partial_x(\rho u)=0,\\
        \partial_t(\rho u)+\partial_x(\rho u^2)=\rho(\phi\ast(\rho u))-\rho u(\phi\ast\rho),
    \end{cases}
\end{equation}
endowed with initial data
\begin{equation}\label{eq:init-data-rho-u}
{\color{black}    \rho(\cdot,0)=\rho^0,\qquad u(\cdot,0)=u^0.}
\end{equation}

{\color{black} Here we provide the definition of weak solution of the Cauchy problem \eqref{eq:system-rho-u}--\eqref{eq:init-data-rho-u}.}
\begin{definition}\label{definition:weak-solution-rho-u}
    Let $\rho^0\in\PP_c(\R)$ and $u^0\in L^\infty(d\rho^0)$. Define $P^0:=\rho^0 u^0$, which is a signed measure. 
    We say that $(\rho,P)=(\rho,\rho u)$ is a weak solution of the ARZ system \eqref{eq:system-rho-u} with initial data \eqref{eq:init-data-rho-u} if for any $T>0$,
    \begin{itemize}
        \item $\rho\in C([0,T];\PP_c(\R))$.
        \item $P(\cdot,t)$ signed measure for any $t\in[0,T]$. Moreover, $P(\cdot,t)$ is absolutely continuous 
        with respect to $\rho(\cdot,t)$ with the Radon-Nikodym derivative $u(\cdot,t)\in L^\infty(d\rho(t))$, 
        where $u(\cdot,t)d\rho(\cdot,t)=dP(\cdot,t)$, for any $t\in[0,T]$.
        \item $(\rho,u)$ satisfies \eqref{eq:system-rho-u} in the sense of distributions.
        \item The initial condition $(\rho^0,P^0)$ is attained in the following sense: 
        $$
        {\color{black} \rho(0)=\rho^0}\,,\qquad \lim_{t\to0+}\int_\R f(x)\,dP(x,t)=\int_\R f(x)\,dP^0(x) \quad \forall \,   f\in C_c^\infty(\R)\,.     
        $$
    \end{itemize}
\end{definition}
Let us define the pseudo-inverse of a nondecreasing function $M$, defined as
$$
M^{-1}(m):=\inf\{ x\in\R:M(x)\geq m\},\quad m\in (0,1],
$$
which is a left-continuous function. 
In the following theorem we show the entropic selection principle and construct the solution to \eqref{eq:system-rho-u}.
\begin{theorem}\label{th:weak_sol_ARZ}
Assume \textbf{(H)} and \eqref{phi-def}. Let $\rho^0\in\PP_c(\R)$, $u^0\in L^\infty(d\rho^0)$ and $P^0=\rho^0 u^0$. 
We construct a unique \textcolor{black}{pair} $(\rho,P)$ using the following procedure.
    \begin{itemize}
        \item[\textbf{(i)}] Let $M^0(x)=\rho^0((-\infty,x])$ and $\psi^0=u^0+\omega\ast\rho^0$. Define a Lipschitz flux $$A:[0,1]\to\R$$ such that
        \begin{equation}\label{def:A-eps-integral-formula}
            A(m)=\int_0^m a(m')\,dm',\qquad m\in[0,1],
        \end{equation}
        with $a(m)=\psi^0\circ(M^0)^{-1}(m)$.
        \item[\textbf{(ii)}] Let $M$ be the unique entropy weak solution of \eqref{def:Cauchy_problem} with initial datum $M^0$ and flux $A$.
        \item[\textbf{(iii)}] Define $(\rho,P)$ using $M$ in the following way
        \begin{equation}\label{def:rho-P-via-M}
            \rho=\partial_x M,\qquad P=-\partial_t M=\partial_x(A\circ M)-(\phi\ast M)\partial_x M.
        \end{equation}
    \end{itemize}
Then, the \textcolor{black}{pair} $(\rho,P)$ is a weak solution of the ARZ system \eqref{eq:system-rho-u} with initial data \eqref{eq:init-data-rho-u} 
in the sense of Definition \ref{definition:weak-solution-rho-u}. Moreover, we can define $u(\cdot,t)=\frac{dP(\cdot,t)}{d\rho(\cdot,t)}$ 
that is the Radon-Nikodym derivative of $P(t)$ with respect to $\rho(t)$.
\end{theorem}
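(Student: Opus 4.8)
The plan is to verify the three bullet points in Definition~\ref{definition:weak-solution-rho-u} one by one, using the entropy solution $M$ furnished by Theorem~\ref{theorem:entropy-solution} together with the Rankine-Hugoniot identity \eqref{eq:RH_M}, which is the exact structural relation that makes $(\rho,P)$ defined in \eqref{def:rho-P-via-M} a distributional solution of \eqref{eq:system-rho-u}. First I would record the regularity: since $M\in BV(\R\times[0,T])$ is nondecreasing in $x$ with $M(-\infty,t)=0$, $M(+\infty,t)=1$ and support of the increment contained in $[-R(T),R(T)]$, the measure $\rho(\cdot,t):=\partial_x M(\cdot,t)$ is a probability measure with compact support; the time-continuity estimate \eqref{eq:regular-time-estimate} (equivalently \eqref{eq:L1_convergence}) together with Remark~\ref{rem:Wass_distance} gives $\rho\in C([0,T];\PP_c(\R))$ in the $\WW_1$ metric. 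Likewise $P:=-\partial_t M=\partial_x(A\circ M)-(\phi\ast M)\partial_x M$ is a well-defined signed measure, the first term because $A$ is Lipschitz so $A\circ M\in BV$, and the second because $\phi\ast M\in L^\infty\cap C^0(\R)$ multiplies the Radon measure $\partial_x M$.

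The heart of the argument is to show $(\rho,u)$ with $u=dP/d\rho$ solves \eqref{eq:system-rho-u} distributionally. The first equation $\partial_t\rho+\partial_x(\rho u)=0$ is immediate: $\partial_t\rho=\partial_t\partial_x M=\partial_x\partial_t M=-\partial_x P=-\partial_x(\rho u)$, all in the sense of distributions on $\R\times(0,T)$. For the second equation I would work on the approximating level. Recall from Theorem~\ref{th:solution_MN} that $M_N(x,t)=\sum_i m_i H(x-x_i(t))$ and $A_N\circ M_N(x,t)=\sum_i m_i\psi_i(t)H(x-x_i(t))$, so that $\rho_N=\partial_x M_N=\sum_i m_i\delta_{x_i(t)}$ and $P_N:=-\partial_t M_N=\sum_i m_i v_i(t)\delta_{x_i(t)}$ (using $\dot x_i=v_i$ away from collisions and the jump relations \eqref{def:common-velocity}--\eqref{eq: conservation of rho-psi} at collision times, which conserve both $\sum m_i\delta_{x_i}$ and $\sum m_i v_i\delta_{x_i}$). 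The momentum equation for the empirical measures then reads, for $f\in C_c^\infty(\R\times[0,T))$,
\begin{equation*}
\int_0^T\!\!\int_\R \partial_t f\,dP_N+\partial_x f\,\frac{d(P_N\otimes?)}{?}\ \cdots
\end{equation*}
— more precisely one differentiates $t\mapsto\sum_i m_i v_i(t)f(x_i(t),t)$, uses the Cucker--Smale ODE \eqref{eq:C-S omega} for $\dot v_i$, and recognizes the double sum $\sum_{i,j}m_i m_j\phi(x_i-x_j)(v_j-v_i)f(x_i,t)$ as exactly the distributional pairing of the right-hand side $\rho_N(\phi\ast P_N)-P_N(\phi\ast\rho_N)$ with $f$. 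Having the identity at the discrete level, I pass to the limit $N\to\infty$ using \eqref{eq:M_N-to-M}--\eqref{eq: dev-M_N-to-M}: $\rho_N\to\rho$, $P_N=-\partial_t M_N\weakstar -\partial_t M=P$ in $\MM(\R)$ for each $t$, $\phi\ast M_N\to\phi\ast M$ uniformly, and the quadratic-in-$\rho$ terms pass to the limit because $\phi$ is bounded and continuous and the measures have uniformly compact support; this is the same type of limiting argument already carried out for the entropy inequality in the proof of Theorem~\ref{theorem:entropy-solution}. The only subtlety is identifying $\int\!\int \partial_x f\,d(\text{flux})$ in the limit as $\partial_x(\rho u^2)$; here I would instead carry the flux in the conservative form $\partial_x(u\,dP)=\partial_x(A\circ M - (\text{lower order}))$ and use the algebraic identity $\rho u^2 = u\,dP$ together with $u=\psi-\omega\ast\rho$ and $A\circ M$ being the antiderivative of $\psi\rho$ along characteristics — essentially reversing the computation on p.~\pageref{scalar balance law} that produced the scalar balance law, so that no genuinely new estimate is needed.

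Next I would verify the absolute continuity $P(\cdot,t)\ll\rho(\cdot,t)$ with $u(\cdot,t)\in L^\infty(d\rho(t))$: from $\rho_N=\sum m_i\delta_{x_i}$, $P_N=\sum m_i v_i\delta_{x_i}$ and the uniform bound \eqref{eq:boundedness v}, $\underline\psi-\|\omega\|_{L^\infty}\le v_i(t)\le\bar\psi$, one has $|dP_N/d\rho_N|\le\tilde M$ uniformly; this bound is stable under weak-$*$ limits in the sense that the limiting $P$ satisfies $|P|\le\tilde M\,\rho$ as measures, hence $P\ll\rho$ and the Radon--Nikodym derivative $u$ is bounded by $\tilde M$. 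Finally, the attainment of the initial data follows from $\rho(\cdot,t)\to\rho^0$ in $\WW_1$ as $t\to0$ (time-continuity of $\rho$) and from $P(\cdot,t)\weakstar P^0$, which one gets from $-\partial_t M(\cdot,t)\weakstar -\partial_t M(\cdot,0)$ via \eqref{eq: dev-M_N-to-M} at $t=0$ once one checks $M_N^0\to M^0$ and $A_N(M_N^0)\to A\circ M^0$ so that the discrete momenta $\sum m_i v_i^0\delta_{x_i^0}$ converge to $P^0=\rho^0 u^0$; uniqueness of $(\rho,P)$ is inherited from the uniqueness of $M$ in Theorem~\ref{theorem:entropy-solution}(a) together with the fact that step (i) determines $A$ uniquely from $(\rho^0,u^0)$. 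The main obstacle I anticipate is the rigorous passage to the limit in the momentum flux term $\partial_x(\rho u^2)$: unlike the linear and the $\phi$-convolution terms, $\rho u^2$ is genuinely nonlinear in the (only weakly converging) momentum, so one must either route through the conservative variable $A\circ M$ as indicated above, or exploit the one-dimensional monotone structure (ordering of particles, no crossing) to upgrade weak-$*$ convergence of $P_N$ to something strong enough — I expect the former, algebraic route to be the clean one, precisely because $A\circ M_N$ converges uniformly by \eqref{eq:convergence-A} and \eqref{eq:M_N-to-M}.
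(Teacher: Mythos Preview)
Your overall strategy is sound, but for the momentum equation it takes a genuinely different route from the paper. You propose to verify the second equation of \eqref{eq:system-rho-u} at the level of the discrete approximants $(\rho_N,P_N)$ via the Cucker--Smale ODE and then pass to the limit, correctly flagging the nonlinear flux $\partial_x(\rho u^2)$ as the delicate term. The paper bypasses this limit entirely: it works directly with the entropy solution $M\in BV(\R\times[0,T])$ and invokes the BV chain rule of Dal~Maso--Lefloch--Murat (Theorem~\ref{th:chain-rule} in the appendix) to produce a single bounded Borel function $\psi(x,t)$ satisfying \emph{simultaneously} $\partial_x(A(M))=\psi\,\partial_x M$ and $\partial_t(A(M))=\psi\,\partial_t M$ as measures. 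The first identity yields at once $P=(\psi-\omega\ast\rho)\rho=u\rho$ with $u$ bounded by $|A|_{\Lip}+\|\omega\|_{L^\infty}$, so absolute continuity and the $L^\infty$ bound on the Radon--Nikodym derivative come for free, without your weak-$*$ stability argument from \eqref{eq:boundedness v}. The second identity gives $\partial_t(\rho\psi)=\partial_t\partial_x(A(M))=\partial_x(\psi\,\partial_t M)=-\partial_x(\rho u\psi)$, i.e.\ the second equation of \eqref{ARZ_conservative}, from which the momentum equation in \eqref{eq:system-rho-u} follows by pure algebra using $u=\psi-\omega\ast\rho$.

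Your ``algebraic route through $A\circ M$'' is exactly this idea, but the step you leave implicit --- that the same $\psi$ serves as density for $D(A\circ M)$ with respect to $DM$ in \emph{both} the $x$ and the $t$ direction --- is not automatic for $M$ merely BV and is precisely the content of Theorem~\ref{th:chain-rule}. With that tool in hand the discrete approximation is not needed at all for the momentum equation, and your anticipated obstacle disappears. Your limit route would also work (writing $\rho_N u_N\psi_N=-\partial_t(A_N\circ M_N)$ and using $A_N\circ M_N\to A\circ M$ in $L^1$), but it is more laborious and ultimately lands on the same algebraic structure.
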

\begin{proof}
The functions $M^0$ and $A$ defined in the step \textbf{(i)} satisfy the hypotheses of Theorem \ref{theorem:entropy-solution}.
Indeed, $M^0$ is a nondecreasing function  by construction, and $\rho^0$ is a probability measure with compact support, 
then it is possible to find an $R^0>0$ such that $\supp \{\rho^0\}\subset[-R^0,R^0]$. Regarding $A$, it is possible to see that
    $$
    \|\psi^0\|_{L^\infty(d\rho^0)} \leq \|u^0\|_{L^\infty(d\rho^0)}+\|\omega\|_{L^\infty(\R)}<+\infty,
    $$
as a consequence $A$ as in \eqref{def:A-eps-integral-formula} is Lipschitz continuous. 
Applying Theorem \ref{theorem:entropy-solution} we obtain a unique entropy weak solution 
of the scalar balance problem \eqref{def:Cauchy_problem}, $M\in BV(\R\times[0,T])$ for a fixed $T>0$. 

To verify that the \textcolor{black}{pair} $(\rho,P)$ constructed as in \eqref{def:rho-P-via-M} satisfies the Definition \ref{definition:weak-solution-rho-u} 
of weak solution, we apply \cite[Lemma 2.1]{Brenier-Grenier}, then the convergence result \eqref{eq:M_N-to-M} gives $\rho\in C([0,T];\PP_c(\R))$, 
while \eqref{eq: dev-M_N-to-M} implies that $P=-\partial_t M\in \MM(\R)$. Since $M\in BV(\R\times[0,T])$ and $A$ is a Lipschitz function, 
using Theorem \ref{th:chain-rule}, we know that there exists a bounded Borel measurable function $\psi=\psi(x,t)$ such that
    $$
    \partial_x(A(M))=\psi\partial_x M,\qquad \partial_t(A(M))=\psi\partial_t M,
    $$
where $\psi=A'$ almost everywhere for the Lebesgue measure on $\R$, moreover $\psi$ is bounded by $|A|_{\Lip}$. Then,
    $$
    P=-\partial_t M=\partial_x(A(M))-(\phi\ast M)\partial_x M=(\psi-(\omega\ast \rho))\rho=u\rho,
    $$
    where $u=\psi-\omega\ast\rho$. Now, we want to verify that $(\rho,u)$ satisfies system \eqref{eq:system-rho-u} in the sense of distributions
    \begin{align*}
        &\partial_t\rho=\partial_t(\partial_x M)=\partial_x(\partial_t M)=\partial_x(-P)=-\partial_x(\rho u),\\
        & \partial_t(\rho\psi)=\partial^2_{tx}(A(M))=\partial_x(\psi\partial_t M)=-\partial_x(\psi P)=-\partial_x(\rho u \psi).
    \end{align*}
    The first equation of \eqref{eq:system-rho-u} is verified, while for the second one we have
    \begin{align*}
        \partial_t(\rho u) & +\partial_x(\rho u^2)= \partial_t(\rho(\psi-\omega\ast\rho))+\partial_x(\rho u(\psi-\omega\ast\rho))\\
        & = \underbrace{\partial_t(\rho \psi)+\partial_x(\rho u\psi)}_{=0} -\partial_t(\rho(\omega\ast\rho))-\partial_x(\rho u(\omega\ast\rho))\\
        & = -\underbrace{(\partial_t\rho+\partial_x(\rho u))(\omega\ast\rho)}_{=0}-\rho\partial_t(\omega\ast\rho)-\rho u \partial_x(\omega\ast\rho)\\
        & = -\rho(\omega\ast\partial_t\rho)-\rho u(\omega\ast\partial_x\rho)\\
        & = \rho(\omega\ast\partial_x(\rho u))-\rho u (\omega\ast\partial_x\rho)=\rho(\phi\ast(\rho u))-\rho u (\phi\ast\rho).
    \end{align*}
    Finally, we have to verify if the initial data are attained in the sense of Definition \ref{definition:weak-solution-rho-u}. 
    By the continuity of $\rho$ in time, we obtain the first condition about $\rho^0$, while for $P$, using \eqref{def:rho-P-via-M}, we obtain
    $$
    \int_\R f(x)\,dP(x,t)=\int_\R f(x)\,dA(M(x,t))-\int_\R f(x)(\phi\ast M)(x,t)\,d\rho(x,t).
    $$
    By the fact that $A$ is Lipschitz, the first integral of the r.h.s. is equal to
    $$
    \int_\R f(x) \partial_x A(M(x,t))\,dx=-\int_\R f'(x)A(M(x,t))\,dx.
    $$
    Now, we pass to the limit as $t\to0$, for the term above, we have
    $$
    \abs{\int_\R f'(x)A(M(x,t))\,dx-\int_\R f'(x)A(M^0(x))\,dx}\leq |A|_{Lip}\|f'\|_{L^\infty}\|M(\cdot,t)-M^0\|_{L^1}\to0.
    $$
    For the second term we can write
    \begin{align*}
        &\abs{\int_\R f(x)(\phi\ast M)(x,t)\,d\rho(x,t)-\int_\R f(x)(\phi\ast M^0)(x)\,d\rho^0(x)}\\
        &\leq \|\phi\ast(M-M^0)\|_{L^\infty(\R)}\abs{\int_\R f(x)\,d\rho(x,t)-\int_\R f(x)\,d\rho^0(x)}\\
        &\leq \|\phi\|_{L^\infty(\R)}\|M-M^0\|_{L^1(\R)}\abs{\int_\R f(x)\,d\rho(x,t)-\int_\R f(x)\,d\rho^0(x)}\to 0,
    \end{align*}
    thanks to the convergences of $\rho$ and $M$.
    This concludes the proof.
\end{proof}
Thanks to Theorem \ref{th:weak_sol_ARZ}, we can conclude that the solution of the ARZ system \eqref{eq:system-rho-u} 
exists and it is unique; moreover it satisfies the stability estimate established in the following proposition.
\begin{prop}
    Let $\rho^0,\mu^0\in\PP_c(\R)$, $u^0\in L^\infty(d\rho^0)$, $v^0\in L^\infty(d\mu^0)$. Consider the weak solutions $(\rho,u)$ and $(\mu,v)$ 
    constructed as in Theorem \ref{th:weak_sol_ARZ} with initial data $(\rho^0,u^0)$ and $(\mu^0,v^0)$ respectively. 
    Then, $\rho$ and $\mu$ satisfy the following stability estimate
    \begin{equation}\label{eq:stab_estimate_rho}
        \WW_1(\rho,\mu)\leq \WW_1(\rho^0,\mu^0)e^{2t\|\phi\|_{L^\infty(\R)}}+\frac{|A-\tilde A|_{Lip}}{2\|\phi\|_{L^\infty(\R)}}(e^{t\|\phi\|_{L^\infty(\R)}}-1).
    \end{equation}
\end{prop}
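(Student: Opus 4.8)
The plan is to reduce the proposition directly to the scalar stability estimate \eqref{eq:stability-estimate} of Theorem~\ref{theorem:entropy-solution}, using the identification of the Wasserstein-1 distance with the $L^1$ distance between cumulative distribution functions recorded in Remark~\ref{rem:Wass_distance}. First I would unwind the construction of Theorem~\ref{th:weak_sol_ARZ}: the solutions are $\rho=\partial_x M$ and $\mu=\partial_x\tilde M$, where $M$ is the unique entropy solution of \eqref{def:Cauchy_problem} with initial datum $M^0(x)=\rho^0((-\infty,x])$ and Lipschitz flux $A$ given by \eqref{def:A-eps-integral-formula} with $a=\psi^0\circ(M^0)^{-1}$, $\psi^0=u^0+\omega\ast\rho^0$, and $\tilde M$ is the entropy solution with datum $\tilde M^0(x)=\mu^0((-\infty,x])$ and flux $\tilde A$ built in the same way from $\tilde\psi^0=v^0+\omega\ast\mu^0$. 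Both fluxes are Lipschitz, since $\|\psi^0\|_{L^\infty(d\rho^0)}\le\|u^0\|_{L^\infty(d\rho^0)}+\|\omega\|_{L^\infty(\R)}$ and similarly for $\tilde\psi^0$ (as already checked inside the proof of Theorem~\ref{th:weak_sol_ARZ}); in particular $|A-\tilde A|_{\Lip}<+\infty$ and the hypotheses of Theorem~\ref{theorem:entropy-solution} are satisfied by both Cauchy problems.

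Next, for a fixed $t\ge0$ I would use that $M(\cdot,t)$ and $\tilde M(\cdot,t)$ are precisely the cumulative distribution functions of $\rho(\cdot,t)$ and $\mu(\cdot,t)$, so that Remark~\ref{rem:Wass_distance} gives
\begin{equation*}
\WW_1\bigl(\rho(\cdot,t),\mu(\cdot,t)\bigr)=\|M(\cdot,t)-\tilde M(\cdot,t)\|_{L^1(\R)},\qquad \WW_1(\rho^0,\mu^0)=\|M^0-\tilde M^0\|_{L^1(\R)}.
\end{equation*}
Applying \eqref{eq:stability-estimate} to the pair $(M,\tilde M)$ and inserting these two identities yields exactly \eqref{eq:stab_estimate_rho}; since $T>0$ is arbitrary in Theorem~\ref{theorem:entropy-solution}, the bound holds for all $t\ge0$.

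I do not expect a real obstacle: the entire content is carried by Theorem~\ref{theorem:entropy-solution}(c) together with the isometry between $(\PP_c(\R),\WW_1)$ and the cumulative-distribution picture in $L^1(\R)$. The two points deserving a line of care are (i) verifying that the fluxes $A$ and $\tilde A$ produced by step \textbf{(i)} of Theorem~\ref{th:weak_sol_ARZ} genuinely satisfy the assumptions of Theorem~\ref{theorem:entropy-solution}, which is needed in order to invoke \eqref{eq:stability-estimate}, and (ii) passing from the finite-horizon statement on $[0,T]$ to all $t\ge0$. One could additionally record, exactly as in the Remark after Theorem~\ref{theorem:entropy-solution}, the companion bound obtained from \eqref{eq:stability-estimate-two}, which is only linear in $t$ and preferable for large times.
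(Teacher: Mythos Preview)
Your proposal is correct and follows exactly the paper's approach: invoke Remark~\ref{rem:Wass_distance} to identify $\WW_1(\rho,\mu)$ with $\|M-\tilde M\|_{L^1(\R)}$ (and likewise at $t=0$), then apply the stability estimate \eqref{eq:stability-estimate} from Theorem~\ref{theorem:entropy-solution}(c). If anything, you are more explicit than the paper in checking that the fluxes $A,\tilde A$ meet the hypotheses and in noting the passage from finite $T$ to all $t\ge 0$.
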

\begin{proof} {\color{black} By means of  \eqref{rem:Wass_distance}, 
the stability estimate \eqref{eq:stability-estimate} rewrites as} 
    \begin{align*}
    \WW_1(\rho,\mu)&=\|M-\tilde M\|_{L^1(\R)}\\
    &\leq e^{2t\|\phi\|_{L^\infty(\R)}}\|M^0-\tilde M^0\|_{L^1(\R)}+\frac{|A-\tilde A|_{Lip}}{2\|\phi\|_{L^\infty(\R)}}(e^{t\|\phi\|_{L^\infty(\R)}}-1)\\
   & =\WW_1(\rho^0,\mu^0)e^{2t\|\phi\|_{L^\infty(\R)}} + \frac{|A-\tilde A|_{Lip}}{2\|\phi\|_{L^\infty(\R)}}(e^{t\|\phi\|_{L^\infty(\R)}}-1)
    \end{align*}
     {\color{black} which is \eqref{eq:stab_estimate_rho}.}
\end{proof}
In what follows we show that it is possible to obtain a solution of system \eqref{eq:system-rho-u} using atomic initial data constructed 
using the sticky particle Cucker-Smale dynamics \eqref{eq:C-S}-\eqref{eq:C-S omega}-\eqref{def:common-velocity}, {\color{black} the following results hold}. 
\begin{prop}
    Let $N\in\N$ and consider system \eqref{eq:system-rho-u} with initial data
    $$
    \rho_N^0(x)=\sum_{j=1}^N m_{j,N}\delta(x-x_{j,N}^0),\qquad P_N^0(x)=\sum_{j=1}^N m_{j,N}v_{j,N}^0\delta(x-x_{j,N}^0).
    $$
Where $x_{j,N}^0\in [-R^0,R^0],$ for a constant $R^0>0,$ and $\sum_{j=1}^N m_{j,N}=1$. 
Let $(x_{j,N}(t),v_{j,N}(t))$ be the solution of the sticky particle Cucker-Smale dynamics \eqref{eq:C-S}-\eqref{eq:C-S omega}-\eqref{def:common-velocity} 
with initial data $(x_{j,N}^0,v_{j,N}^0,m_{j,N})_{j=1}^N$, then the solution of system \eqref{eq:system-rho-u} is
    $$
    \rho_N(x,t)=\sum_{j=1}^N m_{j,N}\delta(x-x_{j,N}(t)),\qquad P_N(x,t)=\sum_{j=1}^N m_{j,N}v_{j,N}(t)\delta(x-x_{j,N}(t)).
    $$
\end{prop}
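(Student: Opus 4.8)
The plan is to show that, for atomic initial data, the general construction of Theorem~\ref{th:weak_sol_ARZ} collapses onto the discrete setting already treated in Theorem~\ref{th:solution_MN}, whose solution is built directly from the sticky particle Cucker--Smale trajectories; the claimed formulas for $\rho_N$ and $P_N$ then follow by unwinding step \textbf{(iii)} of that construction.

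First I would identify the objects produced by step \textbf{(i)} of Theorem~\ref{th:weak_sol_ARZ} applied to $\rho_N^0$. Its cumulative function is the step function $M^0 = M_N^0 = \sum_{j=1}^N m_{j,N}H(\cdot-x_{j,N}^0)$, with range $\{\theta_0,\dots,\theta_N\}$, $\theta_i=\sum_{j\le i}m_{j,N}$, so its pseudo-inverse satisfies $(M_N^0)^{-1}(m)=x_{j,N}^0$ for $m\in(\theta_{j-1},\theta_j]$. Since $\psi^0=u^0+\omega\ast\rho^0$ with $\rho^0=\rho_N^0$, the value of $\psi^0$ at the atom $x_{j,N}^0$ equals $\psi_{j,N}^0:=v_{j,N}^0+\sum_{k=1}^N m_{k,N}\omega(x_{j,N}^0-x_{k,N}^0)$, hence $a(m)=\psi^0\circ(M_N^0)^{-1}(m)=\psi_{j,N}^0$ on $(\theta_{j-1},\theta_j)$. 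Therefore the flux $A(m)=\int_0^m a$ is continuous, piecewise linear with breakpoints at the $\theta_i$, and $A(\theta_i)-A(\theta_{i-1})=m_{i,N}\psi_{i,N}^0$; this is precisely a flux $A_N$ of the form \eqref{def: A_N} together with \eqref{eq: psi_i^0}, and the associated initial velocities $v_i^0=\psi_i^0-\sum_k m_k\omega(x_i^0-x_k^0)$ prescribed in Theorem~\ref{th:solution_MN} coincide with the given $v_{j,N}^0$.

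Next I would invoke Theorem~\ref{th:solution_MN}: with this pair $(M_N^0,A_N)$ the function $M_N(x,t)=\sum_{i=1}^N m_{i,N}H(x-x_{i,N}(t))$, built from the sticky particle trajectories $x_{i,N}(t)$, is an entropy solution of the discretized balance law \eqref{eq:discretized scalar balance law}--\eqref{eq:discretized scalar balance law-init data}, hence of \eqref{def:Cauchy_problem} with datum $M_N^0$ and flux $A_N=A$; by the uniqueness statement in Theorem~\ref{theorem:entropy-solution}(a) it is \emph{the} solution selected in step \textbf{(ii)}. Finally I would unwind step \textbf{(iii)}: $\rho_N=\partial_x M_N=\sum_i m_{i,N}\delta(\cdot-x_{i,N}(t))$ is immediate, while $P_N=-\partial_t M_N$ is computed distributionally from $\partial_t H(x-x_{i,N}(t))=-\dot x_{i,N}(t)\,\delta(x-x_{i,N}(t))$ together with $\dot x_{i,N}=v_{i,N}$, yielding $P_N=\sum_i m_{i,N}v_{i,N}(t)\,\delta(\cdot-x_{i,N}(t))$; the Radon--Nikodym derivative $u_N=dP_N/d\rho_N$ is then the common post-collision velocity at each occupied site, in agreement with \eqref{def:common-velocity}. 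Matching with \eqref{eq:system-rho-u} is then inherited from Theorem~\ref{th:weak_sol_ARZ}.

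I expect the only delicate points to be of a bookkeeping nature. The first is the treatment of coincident initial positions, where the pseudo-inverse skips some $\theta_i$: one must check that $a(m)$ is still well defined on the corresponding interval and equals the common value $\psi_{i,N}^0$ of the cluster, which forces (and is consistent with) assigning a common initial velocity to particles starting at the same point, as is implicit in $u^0\in L^\infty(d\rho^0)$. The second, which I regard as the main obstacle, is justifying the distributional identity $-\partial_t M_N=\sum_i m_{i,N}v_{i,N}\,\delta(\cdot-x_{i,N})$ across collision times, where each $x_{i,N}(\cdot)$ is only piecewise $C^1$: here continuity of the trajectories together with the velocity update \eqref{def:common-velocity} ensures that no singular-in-time contribution appears and that the weights of colliding particles add up carrying the common velocity. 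This is exactly the mechanism already exploited in the proof of Theorem~\ref{th:solution_MN} and in \cite{Leslie-Tan}, whose argument I would adapt.
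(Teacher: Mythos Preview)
Your argument is correct, but note that the paper itself does not supply a proof of this proposition: it simply refers the reader to \cite[Proposition~6.4]{Leslie-Tan}. So the comparison is not between two proofs but between a citation and an actual argument.

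What you do is strictly more informative within the framework of this paper: you observe that for atomic initial data the construction in step \textbf{(i)} of Theorem~\ref{th:weak_sol_ARZ} produces exactly a pair $(M_N^0,A_N)$ satisfying \eqref{def: M_N^0}--\eqref{def: A_N}--\eqref{eq: psi_i^0}, so that Theorem~\ref{th:solution_MN} applies verbatim and its output is, by the uniqueness in Theorem~\ref{theorem:entropy-solution}(a), the entropy solution selected in step \textbf{(ii)}. Unwinding step \textbf{(iii)} then gives the claimed $(\rho_N,P_N)$. This route has the virtue of being entirely internal to the paper and of making explicit that the ``entropic selection principle'' of Theorem~\ref{th:weak_sol_ARZ} really does pick out the sticky particle dynamics on atomic data---a point the paper asserts but delegates to \cite{Leslie-Tan}. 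The two technical caveats you flag (coincident initial positions forcing a common $\psi_i^0$ on the corresponding cluster, and the distributional computation of $\partial_t M_N$ across collision times) are genuine but minor; the first is handled exactly as you say via $u^0\in L^\infty(d\rho^0)$, and the second follows because each $x_{i,N}(\cdot)$ is Lipschitz, so $t\mapsto H(x-x_{i,N}(t))$ is $BV$ in $t$ with no singular part at collision instants.
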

\begin{theorem}
    Let $(\rho^0,P^0)$ satisfy the hypotheses of Theorem \ref{th:weak_sol_ARZ} and let $(\rho,P)$ let be the associated weak solution. 
    There exists a sequence of atomic initial data $(\rho_N^0,P_N^0)_{N=1}^\infty$ such that the sequence of the associated solutions satisfies
    \begin{align*}
    \WW_1(\rho_N(t) & ,\rho(t))\to 0,\\
    \qquad P_N(t) \weakstar P(t) & \qquad\text{in $\MM(\R)$},
    \end{align*}
    for any $t>0$, for $N\to\infty$.
\end{theorem}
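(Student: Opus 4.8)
The plan is to deduce everything from the scalar results of Section~\ref{Sec:3}, exploiting that the sticky particle Cucker-Smale dynamics simultaneously encodes the atomic solutions of \eqref{eq:system-rho-u} and the piecewise constant entropy solutions $M_N$ of the discretized balance law \eqref{eq:discretized scalar balance law}. First I would fix, as in step \textbf{(i)} of Theorem~\ref{th:weak_sol_ARZ}, the datum $M^0(x)=\rho^0((-\infty,x])$ and the Lipschitz flux $A$ attached to $(\rho^0,P^0)$, and let $M$ be the associated entropy solution, so that $(\rho,P)$ is the pair defined in \eqref{def:rho-P-via-M}. Then I would apply part \textbf{(b)} of Theorem~\ref{theorem:entropy-solution}: this produces a sequence of piecewise constant data $M_N^0$ and piecewise linear fluxes $A_N$ fulfilling \eqref{def: M_N^0}-\eqref{def: A_N}, together with the corresponding sticky particle data $(m_{i,N},x_{i,N}^0,v_{i,N}^0)_{i=1}^N$, such that the associated entropy solutions $M_N(x,t)=\sum_{i=1}^N m_{i,N}H(x-x_{i,N}(t))$ satisfy \eqref{eq:M_N-to-M} and \eqref{eq: dev-M_N-to-M}; since the entropy solution is unique by Theorem~\ref{theorem:entropy-solution}, part \textbf{(a)}, and the construction of $M_N^0$ and $A_N$ does not depend on $T$, these convergences hold for the whole sequence, for every $t\ge0$ and every $T>0$. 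Using these data I set the admissible atomic initial conditions
\begin{equation*}
\rho_N^0(x)=\sum_{j=1}^N m_{j,N}\,\delta(x-x_{j,N}^0),\qquad P_N^0(x)=\sum_{j=1}^N m_{j,N}v_{j,N}^0\,\delta(x-x_{j,N}^0),
\end{equation*}
noting that $\sum_j m_{j,N}=1$ and $\{x_{j,N}^0\}_{j=1}^N\subset[-R^0,R^0]$.

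Next I would invoke the preceding Proposition: the unique weak solution of \eqref{eq:system-rho-u} issued from $(\rho_N^0,P_N^0)$ is the atomic pair
\begin{equation*}
\rho_N(x,t)=\sum_{j=1}^N m_{j,N}\,\delta(x-x_{j,N}(t)),\qquad P_N(x,t)=\sum_{j=1}^N m_{j,N}v_{j,N}(t)\,\delta(x-x_{j,N}(t)),
\end{equation*}
with $(x_{j,N}(t),v_{j,N}(t))$ the associated sticky particle trajectory. The crux is then to recognize that $M_N(\cdot,t)$ is the cumulative distribution function of $\rho_N(\cdot,t)$ (immediate from \eqref{def: M_N solution}) and that $P_N(\cdot,t)=-\partial_t M_N(\cdot,t)$. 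For the latter I would differentiate \eqref{eq:discretized scalar balance law}, writing $-\partial_t M_N=\partial_x(A_N(M_N))-(\phi\ast M_N)\partial_x M_N$; then use $\partial_x M_N(\cdot,t)=\sum_j m_{j,N}\delta(\cdot-x_{j,N}(t))=\rho_N(\cdot,t)$, read off, at fixed $t$, $\partial_x(A_N(M_N))=\sum_j m_{j,N}\psi_{j,N}(t)\,\delta(\cdot-x_{j,N}(t))$ from \eqref{eq: A_N(M_N)}, evaluate $\phi\ast M_N$ pointwise at the atoms by \eqref{eq:M_N-xi}, and close with the Rankine-Hugoniot identity \eqref{eq:eq_discontinuity}, i.e. $\psi_{j,N}(t)=v_{j,N}(t)+(\phi\ast M_N)(x_{j,N}(t),t)$; collecting the Dirac masses then yields $-\partial_t M_N(\cdot,t)=\sum_j m_{j,N}v_{j,N}(t)\,\delta(\cdot-x_{j,N}(t))=P_N(\cdot,t)$.

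With these two identifications in hand, the conclusion is just a transfer of the scalar convergences. By Remark~\ref{rem:Wass_distance} and \eqref{eq:M_N-to-M}, $\WW_1(\rho_N(t),\rho(t))=\norm{M_N(\cdot,t)-M(\cdot,t)}_{L^1(\R)}\to0$ uniformly on compact time intervals, and in particular for every $t>0$. For the momentum, \eqref{eq: dev-M_N-to-M} together with $P=-\partial_t M$ from \eqref{def:rho-P-via-M} gives $P_N(t)=-\partial_t M_N(\cdot,t)\weakstar-\partial_t M(\cdot,t)=P(t)$ in $\MM(\R)$ for every $t>0$, which is exactly the asserted convergence.

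I expect the main obstacle to be the identification $P_N=-\partial_t M_N$. It rests on the fact that $\phi\ast M_N$ has a genuine \emph{pointwise} value at the atom positions — so that the nonconservative product $(\phi\ast M_N)\partial_x M_N$ may be split atom by atom — and on the careful bookkeeping of the conserved quantities $\psi_{j,N}$ across collisions, both of which are provided by Theorem~\ref{th:solution_MN}. The other steps are a direct transcription of the estimates of Section~\ref{Sec:3}, and the whole argument parallels \cite[Theorem 6.5]{Leslie-Tan}.
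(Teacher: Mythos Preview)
Your proposal is correct and follows exactly the route the paper indicates by deferring to \cite[Theorem~6.5]{Leslie-Tan}: construct the atomic data from Theorem~\ref{theorem:entropy-solution}(b), identify $(\rho_N,P_N)=(\partial_x M_N,-\partial_t M_N)$, and read off the two convergences from \eqref{eq:M_N-to-M}--\eqref{eq: dev-M_N-to-M} via Remark~\ref{rem:Wass_distance} and \eqref{def:rho-P-via-M}. As a minor shortcut, the identity $P_N=-\partial_t M_N$ follows more directly by differentiating \eqref{def: M_N solution} in $t$ (using $\dot x_{i,N}=v_{i,N}$ away from the finitely many collision times) than through the Rankine--Hugoniot bookkeeping you outline.
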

{\color{black} The above results correspond to Proposition 6.4 and Theorem 6.5 in \cite{Leslie-Tan}. Since their proofs do no require additional consideration, we omit them.}

\appendix
\section{Convolution of measures and functions}\label{sec:convolution}
\setcounter{equation}{0}
In this section we recall the theory about the convolution product between a function and a measure, 
in particular we want to see when this kind of operation is well defined. 
We refer to \cite{AmbrosioFuscoPallara,Bogachev} 
for this section. We start with some definitions.
\begin{definition}
    Let $X$ be a locally compact and separable metric space, $\mathcal{B}(X)$ its Borel $\sigma$-algebra, 
    and consider the measure space $(X,\mathcal{B}(X))$. A positive measure on $(X,\mathcal{B}(X))$ is called 
    a Borel measure. If a Borel measure is finite on the compact sets, it is called a positive Radon measure.
\end{definition}
\begin{definition}
    Let $X,Y$ be metric spaces, and let $f:X\to Y$. We say that $f$ is a Borel function if 
    $f^{-1}(A)\in\mathcal{B}(X)$ for any $A\subset Y$ open set.
\end{definition}
Now we can give the proposition which defines the convolution product between measures and functions.
\begin{prop}
    Let $f$ be a Borel function in $L^p(\R^n)$ and let $\mu$ be a bounded Borel measure on $\R^n$. The function
    $$f\ast\mu(x):=\int_{\R^n} f(x-y)\,d\mu(y),
    $$
    is defined for almost all $x$ with respect to Lebesgue measure and
    $$
    \|f\ast\mu\|_{L^p(\R^n)}\leq \|f\|_{L^p(\R^n)}\|\mu\|.
    $$
\end{prop}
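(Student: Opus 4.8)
The plan is to reduce the statement to the classical Young inequality for convolution of two $L^p$ functions, passing through the normalized probability measure associated with $|\mu|$, and to handle separately the trivial endpoint $p=\infty$.

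First I would settle the measurability and the a.e.\ finiteness. Since $f$ is Borel on $\R^n$ and $(x,y)\mapsto x-y$ is continuous, the map $(x,y)\mapsto f(x-y)$ is Borel on $\R^n\times\R^n$; hence, by Tonelli's theorem applied to the product of the Lebesgue measure with the finite measure $|\mu|$, the function $x\mapsto\int_{\R^n}|f(x-y)|\,d|\mu|(y)$ is Lebesgue measurable and
\[
\int_{\R^n}\!\int_{\R^n}|f(x-y)|^p\,d|\mu|(y)\,dx=\int_{\R^n}\!\Big(\int_{\R^n}|f(x-y)|^p\,dx\Big)d|\mu|(y)=\|f\|_{L^p(\R^n)}^p\,\|\mu\|,
\]
using the translation invariance of the Lebesgue measure. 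In particular the left-hand side is finite, so for a.e.\ $x$ the map $y\mapsto f(x-y)$ lies in $L^p(d|\mu|)$ and, $|\mu|$ being finite, also in $L^1(d|\mu|)$ by H\"older's inequality; thus $f\ast\mu(x)$ is well defined for a.e.\ $x$.

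For the norm bound I would normalize: discarding the trivial case $\mu=0$, set $\nu:=|\mu|/\|\mu\|$, a probability measure. For a.e.\ $x$, by Jensen's inequality applied to the convex function $t\mapsto|t|^p$ and the probability measure $\nu$,
\[
|f\ast\mu(x)|^p\le\Big(\int_{\R^n}|f(x-y)|\,d|\mu|(y)\Big)^p=\|\mu\|^p\Big(\int_{\R^n}|f(x-y)|\,d\nu(y)\Big)^p\le\|\mu\|^p\int_{\R^n}|f(x-y)|^p\,d\nu(y).
\]
Integrating in $x$ and using Tonelli together with the translation invariance of the Lebesgue measure,
\[
\|f\ast\mu\|_{L^p(\R^n)}^p\le\|\mu\|^p\int_{\R^n}\|f\|_{L^p(\R^n)}^p\,d\nu(y)=\|f\|_{L^p(\R^n)}^p\,\|\mu\|^p,
\]
which is the claimed estimate. (Equivalently one could invoke Minkowski's integral inequality directly, $\|f\ast\mu\|_{L^p}\le\int_{\R^n}\|f(\cdot-y)\|_{L^p}\,d|\mu|(y)=\|f\|_{L^p}\|\mu\|$.) The case $p=\infty$ is immediate since $|f\ast\mu(x)|\le\|f\|_{L^\infty(\R^n)}\|\mu\|$ for every $x$.

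The only genuinely delicate point is the joint Borel measurability of $(x,y)\mapsto f(x-y)$ and the consequent a.e.\ finiteness that makes the convolution meaningful; once Tonelli's theorem is in place, the rest is a routine application of Jensen's (or Minkowski's) inequality together with translation invariance.
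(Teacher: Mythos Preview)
Your proof is correct and self-contained. The paper, however, does not prove this proposition at all: it is stated in Appendix~\ref{sec:convolution} as a background result quoted from \cite{AmbrosioFuscoPallara} and \cite{Bogachev}, with no argument given. Your approach---joint Borel measurability of $(x,y)\mapsto f(x-y)$ via composition with the continuous subtraction map, Tonelli against the product of Lebesgue measure and the finite measure $|\mu|$ to obtain a.e.\ finiteness, and then Jensen's inequality on the normalized probability measure $\nu=|\mu|/\|\mu\|$ (or equivalently Minkowski's integral inequality) together with translation invariance of Lebesgue measure---is the standard textbook route and is exactly what one finds in the cited references. One very minor omission: you establish that $f\ast\mu(x)$ is defined for a.e.\ $x$, but do not explicitly say why the resulting function $x\mapsto f\ast\mu(x)$ is Lebesgue measurable; this is, of course, part of the Fubini--Tonelli package you are already using, so no real gap.
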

We want to see if it is possible to apply this theory to our case, in particular we want to verify 
if the convolution product between the function $\omega$ and the probability measure $\rho$ is well defined.
We start with the case of the measure $\rho_N=\sum_{i=1}^N m_i\delta(x-x_i)$ with $\sum_{i=1}^N m_i=1$. 
The $\rho_N$ is a sum of Dirac delta, thus we can verify if the product is well defined using only a Dirac.
We need to verify if the hypotheses of the function and of the measure are satisfied. The delta is a Radon measure, 
in particular is a probability measure, and so it is a bounded Borel measure, thus we have to verify only if 
$\omega$ is a Borel function with respect to the $\sigma$-algebra generated by the Euclidean topology and 
this is true, as $\omega$ is continuous.

The following convolution product is defined for almost all $x$ with respect to Lebesgue measure and
$$
\omega\ast\delta(x)=\int_\R\omega(x-y)\,d\delta(y).
$$
The next step is to verify if the following equality holds true
\begin{equation}\label{eq:omega_conv_rho_N}
\omega\ast\rho_N(x,t)=\phi\ast M_N(x,t).
\end{equation}
We consider a single Dirac delta as before, if the equality before holds true for a single delta 
we can extend the result to the sum of delta's. Thus we want to see if, with $\rho_N=\delta$ and $M_N=H$, 
where $H$ is the right continuous Heaviside function, the following equality is verified
\begin{equation}\label{eq:omega_convolution}
\omega(x)=\int_\R\omega(x-y)\,d\rho(y)=\omega\ast\delta(x)=\phi\ast H(x).
\end{equation}
We want to evaluate the following integral
\begin{align*}
\phi\ast H(x) & =\int_\R\phi(x-y)H(y)\,dy = \int_0^\infty \phi(x-y)\,dy\\
& = -\omega(x-y)|_0^\infty=\omega(x).
\end{align*}
We can conclude that (\ref{eq:omega_convolution}) holds true and then \eqref{eq:omega_conv_rho_N} holds true too.

\section{Chain Rule for functions of Bounded Variation}
The results in this section are taken from \cite{DalMaso-LF-Murat}.
Let $\Omega$ be a bounded open subset of $\R^n$ and let $u\in BV(\Omega,\R)$. 
Let us recall that $x\in\Omega$ is a Lebesgue point of $u$ if there exists a real number $\bar u(x)$ such that
$$
\lim_{\rho\to0}\frac{1}{|B_\rho(x)|}\int_{B_\rho(x)}|u(y)-\bar u(x)|\,dy=0,
$$
with $B_\rho(x)=\{y\in \R^n:|y-x|<\rho\}$ and $|B_\rho(x)|$ denotes the Lebesgue measure of $B_\rho(x)$. 
The set of all Lebesgue points is denoted by $\Omega_u$. A point $x\in\Omega\setminus\Omega_u$ is defined 
a \emph{jump point} of $u$ if there exist two real numbers $u_-(x)$ and $u_+(x)$ and a unit vector $v_u(x)\in\R^n$ 
such that $u_+(x)\neq u_-(x)$ and
$$
\lim_{\rho\to0}\frac{1}{|B^\pm_\rho(x)|}\int_{B^\pm_\rho(x)}|u(y)-u_\pm(x)|\,dy=0,
$$
where $B^\pm_\rho(x)=B_\rho(x)\cap\{y\in\R^n:(y-x,\pm v_u(x))>0\}.$ The set of all jump points of $u$ is denoted by $S_u$. 
Let $g:\R\to\R$ be a locally bounded Borel function, the averaged superposition $\hat{g}(u)$ of $g$ and $u$ is defined by
\begin{equation}\label{def:superposition}
    \hat{g}(u)(x)=\begin{cases}
        g(u(x))\qquad x\in\Omega_u,\\
        \int_0^1 g(u_-(x)+s(u_+(x)-u_-(x)))\,ds\qquad x\in S_u.
    \end{cases}
\end{equation}
\begin{theorem}\label{th:chain-rule}
Let $\Omega$ be a bounded open subset of $\R^n$, and $f:\R\to\R$ be a globally Lipschitz continuous function. 
Consider $u\in BV(\Omega,\R)$ and set $v=f\circ u$. Then the function $v$ belongs to $BV(\Omega,\R)$ and we have the chain rule
$$
Dv=\hat{f^*}(u)Du,\quad\text{as a Borel measures on $\Omega$},
$$
where $f^*:\R\to\R$ denotes an arbitrary bounded Borel function such that
$$
f^*(t)=f'(t),\quad\text{almost everywhere for the Lebesgue measure on $\R$}.
$$
\end{theorem}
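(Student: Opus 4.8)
The plan is to verify the identity $Dv=\hat{f^*}(u)Du$ on each piece of the canonical decomposition
\[
Du = \nabla u\,\mathcal{L}^n + (u_+-u_-)\,v_u\,\mathcal{H}^{n-1}\lfloor S_u + D^c u
\]
into its absolutely continuous, jump, and Cantor parts, which are mutually singular. Before that one checks that $v\in BV(\Omega,\R)$: mollifying, $u_\eps\to u$ in $L^1_{loc}$ and strictly in $BV$ on compactly contained subdomains, and since $f$ is Lipschitz with constant $L$ one has $|D(f\circ u_\eps)|=|f'(u_\eps)||\nabla u_\eps|\,\mathcal{L}^n\le L|Du_\eps|$; lower semicontinuity of the total variation then gives $|Dv|\le L|Du|$, and boundedness of $\Omega$ gives $v\in L^1(\Omega)$. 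One also notes that $\hat{f^*}(u)$ is a bounded Borel function, since $f^*$ is Borel and bounded by $L$ and the averaging in \eqref{def:superposition} preserves Borel measurability.

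For the absolutely continuous part I would argue pointwise. Let $N\subset\R$ be the $\mathcal{L}^1$-null set where $f'$ fails to exist. On $u^{-1}(N)$ the approximate gradient $\nabla u$ vanishes $\mathcal{L}^n$-a.e.\ (the standard fact that a Sobolev/$BV$ function has vanishing approximate gradient a.e.\ on the preimage of a Lebesgue-null set), and at such points the bound $|v(y)-v(x)|\le L|u(y)-u(x)|$ forces $\nabla v(x)=0$ as well, so both sides vanish there. Off $u^{-1}(N)$, at $\mathcal{L}^n$-a.e.\ point $x$ both $u$ is approximately differentiable and $f$ is differentiable at $u(x)$, and a blow-up argument yields $\nabla v(x)=f'(u(x))\nabla u(x)=f^*(u(x))\nabla u(x)$, matching the density of $\hat{f^*}(u)\,D^a u$.

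For the jump part one uses $S_v\subseteq S_u$ up to $\mathcal{H}^{n-1}$-null sets, with the same jump direction $v_u$ and one-sided traces $f(u_\pm)$, so $D^j v=(f(u_+)-f(u_-))\,v_u\,\mathcal{H}^{n-1}\lfloor S_u$. On the other hand the density of $\hat{f^*}(u)\,D^j u$ at $x\in S_u$ is $\big(\int_0^1 f^*(u_-+s(u_+-u_-))\,ds\big)(u_+-u_-)$, and the substitution $t=u_-+s(u_+-u_-)$ rewrites it as $\int_{u_-}^{u_+}f^*(t)\,dt=f(u_+)-f(u_-)$, the last equality because $f$ is absolutely continuous and $f^*=f'$ off a Lebesgue-null set; hence the two jump measures agree.

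The main obstacle is the Cantor part, where one must make sense of $f^*(\widetilde u)$ at $|D^c u|$-a.e.\ point even though $f^*$ is only prescribed $\mathcal{L}^1$-a.e. My plan is to first establish the chain rule for $f\in C^1$ (where there is no ambiguity), obtaining $D(f\circ u)\lfloor\Omega_u=f'(\widetilde u)\,Du\lfloor\Omega_u$ on the diffuse part, again by strict approximation. Then, for Lipschitz $f$, take $f_k\in C^1$ with $f_k\to f$ uniformly, $\Lip(f_k)\le L$, and $f_k'\to f^*$ $\mathcal{L}^1$-a.e. The decisive point is that the push-forward $\widetilde u_{\#}\big(|Du|\lfloor\Omega_u\big)$ is absolutely continuous with respect to $\mathcal{L}^1$ on $\R$ — a consequence of the coarea formula for $BV$ functions — which upgrades $f_k'\to f^*$ to convergence $|Du|\lfloor\Omega_u$-a.e.\ of $f_k'(\widetilde u)$ to $f^*(\widetilde u)$. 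Since everything is bounded by $L$, dominated convergence gives $f_k'(\widetilde u)\,Du\lfloor\Omega_u\to f^*(\widetilde u)\,Du\lfloor\Omega_u$ in total variation, while $f_k\circ u\to f\circ u$ in $L^1$ forces $D(f_k\circ u)\to Dv$ in the sense of distributions; identifying the diffuse parts yields $Dv\lfloor\Omega_u=f^*(\widetilde u)\,Du\lfloor\Omega_u$, i.e.\ $\hat{f^*}(u)\,Du$ on the absolutely continuous and Cantor parts. Combined with the jump computation this gives $Dv=\hat{f^*}(u)Du$ as Borel measures on $\Omega$. I expect the absolute continuity of this push-forward to be the delicate ingredient, the rest being standard fine-structure arguments for $BV$ functions.
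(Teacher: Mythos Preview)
The paper does not prove this theorem at all: it is quoted in Appendix~B from \cite{DalMaso-LF-Murat} as a known result, with no argument given. So there is no ``paper's own proof'' to compare against.

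On its merits, your outline is the standard route to the scalar $BV$ chain rule (essentially the Vol'pert/Ambrosio--Dal Maso argument reproduced, e.g., in \cite{AmbrosioFuscoPallara}). The decomposition into absolutely continuous, jump, and Cantor parts is the right organizing principle; your treatment of the a.c.\ part via approximate differentiability and of the jump part via the substitution $t=u_-+s(u_+-u_-)$ is correct. The crucial and correctly identified ingredient is that $\widetilde u_{\#}\big(|Du|\lfloor\Omega_u\big)\ll\mathcal{L}^1$: this follows from the coarea formula together with the observation that every approximate continuity point lying on $\partial^*\{u>t\}$ must satisfy $\widetilde u(x)=t$, so for a Lebesgue-null $N\subset\R$ the coarea integrand vanishes for $t\notin N$. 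With that in hand, mollifying $f$ yields $f_k\to f$ uniformly on $\R$ (hence $f_k\circ u\to f\circ u$ in $L^1(\Omega)$ since $\Omega$ is bounded), $f_k'\to f^*$ $\mathcal{L}^1$-a.e.\ along a subsequence, and dominated convergence passes the $C^1$ chain rule to the limit on the diffuse part. One small point you glossed over: to ``identify the diffuse parts'' from the distributional convergence $D(f_k\circ u)\to Dv$ you should also check that the jump parts $D(f_k\circ u)\lfloor S_u=(f_k(u_+)-f_k(u_-))\,v_u\,\mathcal{H}^{n-1}\lfloor S_u$ converge in total variation to $Dv\lfloor S_u$, which follows from uniform convergence of $f_k$ and the dominator $L|u_+-u_-|$. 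With that addition the argument is complete.
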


\section*{Declarations}

- {\bf Availability of data and material}: Not applicable.

- {\bf Competing interests}: The authors declare that they have no competing interests.

- {\bf Funding}: The authors were partially supported by the GNAMPA 2025 project \emph{"Analisi e controllo di modelli evolutivi con fenomeni non locali"} and by the project \emph{"Leggi di conservazione con termini nonlocali e applicazioni al traffico veicolare"} 
(Progetto per la ricerca di base 2025, University of L'Aquila).

- {\bf Authors' contributions}: All authors have contributed equally to the work. All authors read and approved the final manuscript.

- {\bf Acknowledgements}: 
{The authors wish to thank Prof. A.~\'{S}wierczewska-Gwiazda for pointing out references \cite{CPSZ2024,WZ2025}. They also wish to thank the two referees for their careful reviews and insightful comments.}

\end{document}